\numberwithin{equation}{section}
\newtheorem{theorem}{Theorem}[section]
\newtheorem{proposition}[theorem]{Proposition}
\newtheorem{conjecture}[theorem]{Conjecture}
\newtheorem{lemma}[theorem]{Lemma}
\theoremstyle{definition}
\newtheorem{remark}[theorem]{Remark}
\DeclareMathOperator\lk{\mathrm{lk}}
\DeclareMathOperator\st{\mathrm{st}}
\newcommand{\cupdot}{\mathbin{\mathaccent\cdot\cup}}
\newcommand{\field}{{\bf k}}
\newcommand{\Sp}{\mathbb{S}}
\title{The upper bound theorem for flag homology 5-manifolds}
\author{Hailun Zheng\\
	\small Department of Mathematics \\[-0.8ex]
	\small University of Michigan\\[-0.8ex]
	\small Ann Arbor, MI 48109, USA\\[-0.8ex]
	\small \texttt{hailunz@umich.edu}
}
\begin{document}
	\maketitle
	\begin{abstract}
		We prove that among all flag homology $5$-manifolds with $n$ vertices, the join of $3$ circles of as equal length as possible is the unique maximizer of all the face numbers. The same upper bounds on the face numbers hold for $5$-dimensional flag Eulerian normal pseudomanifolds.
	\end{abstract}
\section{Introduction}
A simplicial complex $\Delta$ is called \emph{flag} if all of its minimal non-faces are two-element sets. Flag complexes form a fascinating family of simplicial complexes. By the definition, the clique complex of any graph is a flag complex. Important classes of flag complexes include the barycentric subdivision of regular CW complexes and Coxeter complexes. 

We denote by $f_i$ the number of $i$-dimensional faces in a $(d-1)$-dimensional simplicial complex $\Delta$. The vector $(f_{-1}, f_0, \ldots, f_{d-1})$ is called the $f$\emph{-vector} of $\Delta$. Face enumeration is an active research field in geometric combinatorics, yet many problems on the face numbers of flag complexes remain open. The Kruskal-Katona theorem \cite{Ka, Kr} characterizes the $f$-vectors of simplicial complexes. For flag complexes, tight bounds on face numbers are not so well-understood, see, for example, \cite{F}. In particular, in the class of polytopes (or simplicial spheres) of a fixed dimension and with a fixed number of vertices, the celebrated upper bound theorem \cite{M}, \cite{St} states that neighborly polytopes (or neighborly spheres) simultaneously maximize all face numbers. Since neighborly $(d-1)$-spheres are not flag $(d-1)$-dimensional complexes, it is natural to ask whether there is a sharp upper bound conjecture for flag simplicial spheres.

Denote by $J_m(n)$ the simplicial $(2m-1)$-sphere with $n$ vertices obtained as the join of $m$ copies of the circle, each one a cycle with either $\lfloor \frac{n}{m} \rfloor$ or $\lceil \frac{n}{m}\rceil$ vertices. Also denote by $J_m^*(n)$ the suspension of $J_m(n-2)$. Several upper bound conjectures on the face numbers of flag homology spheres was proposed in recent years, as summarized in the following.
\begin{conjecture}[Nevo-Petersen, {\cite[Conjecture 6.3]{NP}}]\label{conj: Nevo-Petersen}
	The $\gamma$-vector of any flag homology sphere satisfies Frankl-F{\"u}redi-Kalai inequalities (see \cite{FFK}). 
\end{conjecture}

In particular, if this conjecture holds, then the face numbers of flag $(2m-1)$-spheres with $n$ vectices are simultaneously maximized by the face numbers of $J_m(n)$.

\begin{conjecture}[Nevo-Lutz, {\cite[Conjecture 6.3]{LN}}]\label{conj: Nevo-Lutz}
	In the class of flag homology $(2m-1)$-spheres with $n$ vectices, $J_m(n)$ is the unique maximizer of all the face numbers.
\end{conjecture}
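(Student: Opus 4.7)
I note first that Conjecture 1.2 is open in its full generality (it implies Gal's conjecture); accordingly I focus on the case $m=3$ of flag homology $5$-manifolds, which is the case announced in the abstract.

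The first reduction uses Dehn--Sommerville. A closed homology $5$-manifold has Euler characteristic zero, so $h_i(\Delta) = h_{6 - i}(\Delta)$; thus once $n$ is fixed, only $h_2 = h_4$ and $h_3$ remain undetermined. Since each $f_i$ is a nonnegative linear combination of the $h_j$'s via $f_{i-1} = \sum_j \binom{6-j}{i-j} h_j$, it suffices to prove the two inequalities
\[
h_2(\Delta) \leq h_2(J_3(n)), \qquad h_3(\Delta) \leq h_3(J_3(n)),
\]
with simultaneous equality forcing $\Delta \cong J_3(n)$. It is convenient to pass to the $\gamma$-vector, which is multiplicative under joins: since an $\ell$-cycle has $\gamma$-polynomial $1 + (\ell-4)t$, we obtain $\gamma(J_3(n);t) = \prod_{j=1}^{3}\bigl(1 + (\ell_j - 4)t\bigr)$ with $\ell_j \in \{\lfloor n/3\rfloor, \lceil n/3\rceil\}$, and in particular $\gamma_3(J_3(n)) = \prod_j (\ell_j - 4)$.

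The heart of the argument is then establishing $\gamma_2(\Delta) \leq \gamma_2(J_3(n))$ and $\gamma_3(\Delta) \leq \gamma_3(J_3(n))$. For $\gamma_2$ I would combine the flag condition on the $1$-skeleton (which forces missing edges to destroy many potential higher faces) with Frankl--F\"uredi--Kalai-type symmetric-function inequalities on $\gamma$-vectors of flag spheres. For $\gamma_3$ the natural first attempt is vertex-link induction via $(i+1) f_i(\Delta) = \sum_v f_{i-1}(\lk_\Delta(v))$: every vertex link is a flag homology $4$-sphere, and one can try to leverage known low-dimensional $\gamma$-nonnegativity results together with a careful summation over vertices.

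The main obstacle, I anticipate, is the tight bound on $\gamma_3$ and its equality characterization. Naive vertex-link induction compares to the link of a vertex of $J_3(n)$, namely $C_{\ell - 1} * C_{\ell'} * C_{\ell''}$, which is not itself the face-maximizing flag $4$-sphere on its vertex set, so one cannot apply an upper bound theorem termwise. The likely route is therefore a direct proof for flag $5$-manifolds combining local $\gamma$-inequalities with a global rigidity argument: the equality $\gamma_3(\Delta) = \gamma_3(J_3(n))$ should force each vertex link to already decompose as a join of two circles, and a patching argument over overlapping stars should then force $\Delta$ itself to be a join of three circles, yielding uniqueness. The extension to Eulerian normal pseudomanifolds should go through provided all inputs used are local (Dehn--Sommerville, flag condition, and link structure) rather than deeper manifold theory.
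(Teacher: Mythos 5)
Your proposal identifies the right shape of the problem---reduce to finitely many independent quantities, and push the bound through the vertex links---and you correctly diagnose the central obstacle: the link of a vertex of $J_3(n)$ is a join $C_{\ell-1}*C_{\ell'}*C_{\ell''}$ of three circles, which is not the face-maximizing flag $4$-sphere on its vertex count, so a termwise application of an even-dimensional upper bound theorem to $\sum_v f_{i-1}(\lk(v))$ cannot work. But you do not resolve this obstacle; you only gesture at a ``global rigidity argument'' for $\gamma_3$, and for $\gamma_2$ you invoke Frankl--F\"uredi--Kalai-type inequalities on the $\gamma$-vector, which for flag spheres are precisely the content of Conjecture~\ref{conj: Nevo-Petersen} and hence cannot be assumed. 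So both halves of your $\gamma$-vector program are presently unsupported.

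The paper's route is genuinely different and supplies exactly the missing leverage. Rather than bounding $\gamma$-entries, it works directly with $f_1$ and $f_{2m-1}$: once $f_0=n$ is fixed, Dehn--Sommerville (Lemma~\ref{lm: dehn-sommerville}) exhibits $f_2,f_3,f_4$ as nonnegative combinations of $f_1$ and $f_5$, so bounding $f_1$ and $f_5$ suffices. (Note that your reduction to $h_2,h_3\le h_2(J),h_3(J)$ would prove something \emph{stronger} than needed: $f_5\le f_5(J)$ and $f_1\le f_1(J)$ give only $2h_2+h_3\le 2h_2(J)+h_3(J)$ and $h_2\le h_2(J)$, not $h_3\le h_3(J)$ separately, so your version is at least as hard and possibly harder.) The bound $f_1(\Delta)\le f_1(J_m(n))$ is imported from \cite{Z}. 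The new content is the bound on $f_{2m-1}$, and the decisive idea---absent from your sketch---is an inclusion--exclusion estimate over a facet (Lemmas~\ref{lemma: sum of codim 2 face links}--\ref{lemma: sum of vertex links}) giving $\sum_{v\in\sigma}f_0(\lk(v))\le 2(m-1)n+4m$ for every facet $\sigma$, with tightness only under strong covering conditions. This is combined with the fact that every vertex link in a flag $5$-dimensional Eulerian complex is a $4$-dimensional flag Eulerian complex whose $\gamma$-polynomial is a quadratic with at least one real root (Gal's Corollary~3.1.7), i.e.\ $4\gamma_2\le\gamma_1^2$; that discriminant inequality is exactly what is needed in Proposition~\ref{theorem: upper bound inequality} and is a theorem, not a conjecture. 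The equality analysis then runs through Lemmas~\ref{lemma: flag 2-sphere property} and \ref{lemma: join of circles}, forcing $\Delta$ to be a join of three circles of near-equal length. In short: your proposal is a plausible reduction plus an honest admission of where the difficulty lies, but the key combinatorial engine (the inclusion--exclusion bound over facets) and the correct even-dimensional input (the real-root theorem for degree-$2$ $\gamma$-polynomials, rather than conjectural FFK inequalities) are both missing.
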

Conjecture \ref{conj: Nevo-Petersen} also implies the following upper bound conjecture for even-dimensional flag spheres.
\begin{conjecture}[Adamaszek-Hladk{\'y}, {\cite[Conjecture 18]{AH}}]\label{conj: A-H}
     The face numbers of any flag homology $2m$-sphere with $n$ vertices are simultaneously maximized by those of $J_m^*(n)$.
\end{conjecture}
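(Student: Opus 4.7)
The natural plan is induction on $m$ via the link formula $(i+1) f_i(\Delta) = \sum_{v \in V(\Delta)} f_{i-1}(\lk_\Delta(v))$. Since every vertex link in a flag homology $2m$-sphere is itself a flag homology $(2m-1)$-sphere on $d(v) := f_0(\lk_\Delta(v))$ vertices, assuming Conjecture~\ref{conj: Nevo-Lutz} in dimension $2m-1$ gives
\[
(i+1)\, f_i(\Delta) \;\le\; \sum_{v \in V(\Delta)} f_{i-1}\bigl(J_m(d(v))\bigr),
\]
so the problem reduces to bounding the right-hand side over degree sequences attainable by flag homology $2m$-spheres on $n$ vertices.

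The key structural ingredient is a convexity lemma: from the explicit join formula for $J_m$ one checks that $n \mapsto f_{i-1}(J_m(n))$ is convex in $n$, so the above bound is maximized by pushing the $d(v)$ toward the extremes. No vertex can have degree $n-1$ (that would make $\Delta$ a cone, incompatible with the homology-sphere condition in positive dimension), and a vertex of degree $n-2$ in a flag complex has a unique non-neighbor which must form a missing edge with it. A short flag argument shows that if two vertices $x,y$ both attain degree $n-2$, then $\{x,y\}$ is the unique missing edge between them, $\lk_\Delta(x) = \lk_\Delta(y)$, and $\Delta = \Sigma\,\lk_\Delta(x)$. Inductively invoking Conjecture~\ref{conj: Nevo-Lutz} then forces $\lk_\Delta(x) \cong J_m(n-2)$ and hence $\Delta \cong J_m^*(n)$. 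The degree sequence of $J_m^*(n)$---two poles of degree $n-2$ plus the $J_m(n-2)$ degrees shifted by two---is precisely the convex optimum, confirming the asserted bound.

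The main obstacles are twofold. First, Conjecture~\ref{conj: Nevo-Lutz} is open in general; the present paper proves it only for $2m-1 = 5$, which via the strategy above would yield Conjecture~\ref{conj: A-H} in dimension $6$, but higher dimensions require the as-yet-unknown odd-dimensional bound. Second, the convex optimization requires control of the total degree $\sum_v d(v) = 2 f_1(\Delta)$, yet $f_1$ is itself part of what one is bounding; breaking this circularity requires either a joint induction on all face numbers simultaneously or an a priori edge bound for flag homology $2m$-spheres. I expect the technical heart to be the rigidity step: ruling out spread-out degree sequences containing no vertex of degree $n-2$ that nevertheless achieve near-maximal $\sum_v f_{i-1}(J_m(d(v)))$, which will require a delicate combinatorial analysis of which sequences are actually realizable by flag homology $2m$-spheres.
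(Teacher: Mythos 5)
The statement you were asked to prove is labeled a \emph{conjecture} in the paper and carries no proof there; it is recorded as an open problem of Adamaszek and Hladk\'y, and the paper only uses a small special case of it (the facet-number bound for flag homology $4$-spheres, which follows from Gal's real-rootedness result for $\gamma$-polynomials) as the hypothesis~\eqref{eq: even-dimension conjecture} feeding into Proposition~\ref{theorem: upper bound inequality}. Note also that the logical direction in the paper is even-to-odd: the even-dimensional bound on vertex links is assumed in order to deduce the odd-dimensional bound on $\Delta$, which is the opposite of the link-induction route you sketch (odd-dimensional Conjecture~\ref{conj: Nevo-Lutz} on links $\Rightarrow$ even-dimensional bound on $\Delta$). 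So there is no proof in the paper to compare against, and you are right to treat this as open.

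On the substance of your outline, the two obstacles you name are real and they are not peripheral; they are the reason the statement is still a conjecture. Even assuming Conjecture~\ref{conj: Nevo-Lutz} in dimension $2m-1$, the convex-optimization step does not close. Convexity of $n\mapsto f_{i-1}(J_m(n))$ under the budget $\sum_v d(v)=2f_1(\Delta)$ and the cap $d(v)\le n-2$ pushes mass to the corners of the feasible polytope, which generically means as many vertices as possible at degree $n-2$, not exactly two. Your flag argument that at most two vertices can attain degree $n-2$ (and that then $\Delta$ is a suspension) is correct, but once that ceiling binds for two coordinates, the optimum for the remaining coordinates sits at whatever the next binding constraint is, and nothing shown so far identifies that with the degree profile of $J_m^*(n)$; degree sequences that are ``spread out'' away from $n-2$ are not ruled out. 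More fundamentally, the budget $2f_1(\Delta)$ is itself one of the quantities being bounded, so the optimization is circular without an a priori edge bound for flag homology $2m$-spheres, and Dehn--Sommerville in odd $d=2m+1$ does not reduce $f_1$ to known quantities the way it does in the odd-dimensional manifold case (Lemma~\ref{lm: dehn-sommerville}). As written, the proposal is a reasonable heuristic, with the hard part honestly flagged but unresolved; it is not a proof, and there is no corresponding argument in the paper.
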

We remark that in contrast to the odd-dimensional conjecture, the conjectured maximizer of each face number for even-dimensional flag spheres is not unique, see the discussion in \cite{Z}.

Conjectures \ref{conj: Nevo-Petersen} and \ref{conj: Nevo-Lutz} have been confirmed in several cases. Adamaszek and Hladk{\'y} \cite{AH} proved an asymptotic version of the conjectures in the class of flag simplicial $(2m-1)$-manifolds on $n$ vertices. Indeed they showed that not only the $f$-numbers, but also $h$-numbers, $g$-numbers and $\gamma$-numbers are maximized by $J_m(n)$ as long as the number of vertices is large enough. More recently, Zheng \cite{Z} proved Conjectures \ref{conj: Nevo-Petersen} and \ref{conj: Nevo-Lutz} for all 3-dimensional flag simplicial manifolds. The same upper bound also holds for all 3-dimensional flag Eulerian complexes.

In this manuscript, we extend the technique used in \cite{Z} (namely, an application of the inclusion-exclusion principle) to prove that the Nevo-Petersen conjecture and the Nevo-Lutz conjecture also hold for flag homology 5-manifolds, see Theorem \ref{thm: main result}. Our proof reveals a strong relation between the odd-dimensional conjecture and even-dimensional conjecture: we show that if Conjecture \ref{conj: A-H} holds for the facet number of flag homology $2m$-spheres, then both Conjecture \ref{conj: Nevo-Petersen} and \ref{conj: Nevo-Lutz} holds for the facet number of flag homology $(2m+1)$-manifolds. The same argument applies to prove that Conjecture \ref{conj: Nevo-Petersen} continues to hold in the class of 5-dimensional flag Eulerian normal pseudomanifolds.

In \cite{G} Gal not only disproved the real rootedness conjecture but also proposed the following  conjecture. 
\begin{conjecture}[Gal, {\cite[Conjecture 3.2.2]{G}}]\label{conj: Gal}
	If $\Delta$ is a flag simplicial 3-sphere with $n$ vertices and $f_1(\Delta)> \frac{n^2}{4}+\frac{n}{2}+4$, then $\Delta$ is the join of two cycles. 
\end{conjecture}
Adamaszek and Hladk{\'y} \cite{AH1} verified this conjecture in the class of flag 3-manifolds with sufficiently many vertices. In this manuscript we show that for $m=2,3$, there exists a constant $b(m)>0$ such that if $\Delta$ is a flag homology $(2m-1)$-manifold with $n$ vertices and $f_{2m-1}(\Delta)\geq f_{2m-1}(J_m(n))-b(m)n^{m-1}$, then $\Delta$ is indeed the join of two cycles.

The structure of this manuscript is as follows. Section 2 contains basic definitions and properties related to simplicial complexes and flag complexes. In Section 3, we discuss the upper bound results on flag homology 5-manifolds and 5-dimensional Eulerian complexes, as well as a generalization of Gal's conjecture.
\section{Preliminaries}
	A \emph{simplicial complex} $\Delta$ on a vertex set $V=V(\Delta)$ is a collection of subsets
	$\sigma\subseteq V$, called faces, that is closed under inclusion. For $\sigma\in \Delta$, let $\dim\sigma:=|\sigma|-1$ and define the \emph{dimension} of $\Delta$, $\dim \Delta$, as the maximal dimension of its faces. A \emph{facet} in $\Delta$ is a maximal face under inclusion, and we say that $\Delta$ is \emph{pure} if all of its facets have the same dimension. We denote by $\cupdot$ the disjoint union of sets.
	
	For a $(d-1)$-dimensional complex $\Delta$, we let $f_i = f_i(\Delta)$ be the number of $i$-dimensional faces of $\Delta$ for $-1\leq i\leq d-1$. The vector $(f_{-1}=1, f_0, \ldots, f_{d-1})$ is called the $f$\emph{-vector} of $\Delta$. If $\Delta$ is a simplicial complex and $\sigma$ is a face of $\Delta$, the \emph{link} of $\sigma$ in $\Delta$ is $\lk(\sigma,\Delta):=\{\tau-\sigma\in \Delta: \sigma\subseteq \tau\in \Delta\}$, and the \emph{star} of $\sigma$ in $\Delta$ is $\st(\sigma,\Delta):=\{\tau\in \Delta: \sigma\cup \tau\in \Delta\}$. When the context is clear, we will abbreviate the notation and write them as $\lk(\sigma)$ and $\st(\sigma)$ respectively. The \emph{deletion} of a vertex set $W$ from $\Delta$ is $\Delta\backslash W:=\{\sigma\in\Delta:\sigma\cap W=\emptyset\}$. The \emph{restriction} of $\Delta$ to a vertex set $W$ is defined as $\Delta[W]:=\{\sigma\in \Delta:\sigma\subseteq W\}$. If $\Delta$ and $\Gamma$ are two simplicial complexes on disjoint vertex sets, then the \emph{join} of $\Delta$ and $\Gamma$, denoted as $\Delta *\Gamma$, is the simplicial complex on vertex set $V(\Delta)\cupdot V(\Gamma)$ whose faces are $\{\sigma\cup\tau:\sigma\in\Delta, \tau\in\Gamma\}$.
	
	Denote by ${H}_*(\Delta; \field)$ the \emph{homology} of $\Delta$ with coefficients in $\field$ and by $\beta_i(\Delta;\field)=\dim_\field {H}_i(\Delta; \field)$ the \emph{Betti numbers} of $\Delta$. A simplicial complex $\Delta$ is a $\field$-\emph{homology manifold} if the homology group ${H}_*(\lk(\sigma), \field)\cong {H}_*(\Sp^{d-1-|\sigma|}, \field)$ for all nonempty face $\sigma\in\Delta$. A $\field$-homology sphere is a $\field$-homology manifold that has the $\field$-homology of a sphere. A $(d-1)$-dimensional simplicial complex $\Delta$ is called a \emph{$(d-1)$-pseudomanifold} if it is pure and every $(d-2)$-face (called \emph{ridge}) of $\Delta$ is contained in exactly two facets. A $(d-1)$-pseudomanifold $\Delta$ is called a \emph{normal $(d-1)$-pseudomanifold} if it is connected, and the link of each face of dimension at most $d-3$ is also connected. Every normal 2-pseudomanifold is a homology 2-manifold. However, for $d>3$, the class of normal $(d-1)$-pseudomanifolds is much larger than the class of homology $(d-1)$-manifolds. Also the family of pseudomanifolds (normal pseudomanifolds, resp.) is closed under taking links.
	
	We let $\chi(\Delta)=\sum_{i=0}^{d-1}(-1)^i\beta_i(\Delta;\field)$ be the \emph{Euler characteristic} of $\Delta$. We say $\Delta$ is \emph{Eulerian} if $\Delta$ is pure and $\chi(\lk(\sigma)) = (-1)^{\dim \lk(\sigma)}+1$ for every $\sigma\in\Delta$ including $\sigma = \emptyset$. Eulerian complexes are pseudomanifolds and it follows from the
	Poincar\'e duality theorem that all odd-dimensional orientable homology manifolds are Eulerian. Recall that the \emph{$h$-vector} of $\Delta$, $(h_0,h_1, \dots, h_d)$, is defined by the relation $\sum_{i=0}^d h_i t^{d-i}=\sum_{i=0}^d f_{i-1}(t-1)^{d-i}$. The following lemma is the well-known Dehn-Sommerville relations for Eulerian complexes. 
	\begin{lemma}\label{lm: dehn-sommerville}
		Let $\Delta$ be a $(d-1)$-dimensional Eulerian complex. Then $h_i(\Delta)=h_{d-i}(\Delta)$ for all $0\leq i\leq d$. In particular, if $d=6$, then the $f$-vector of $\Delta$ can be written as
		\[(f_0, f_1, f_2=f_5+2f_1-2f_0, f_3=3f_5+f_1-f_0, f_4=3f_5, f_5).\]
	\end{lemma}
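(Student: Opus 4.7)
The first assertion $h_i(\Delta)=h_{d-i}(\Delta)$ is the classical Dehn--Sommerville symmetry, and I would prove it by showing that it is equivalent to a system of Euler relations, one for the link of each face. More precisely, the plan is first to observe that the Eulerian hypothesis $\chi(\lk\sigma)=(-1)^{\dim\lk\sigma}+1$ for every $\sigma\in\Delta$ (including $\sigma=\emptyset$) is equivalent to the ``upper-Euler'' identity
\[\sum_{\tau\in\Delta,\,\tau\supseteq\sigma}(-1)^{|\tau|-|\sigma|}=(-1)^{d-|\sigma|}\qquad\text{for every }\sigma\in\Delta,\]
which follows from expanding $\chi(\lk\sigma)$ as $-\sum_{\tau\supsetneq\sigma}(-1)^{|\tau|-|\sigma|}$ and collecting terms. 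Multiplying each such identity by $(t-1)^{d-|\sigma|}$, summing over $\sigma\in\Delta$ of fixed cardinality, and matching coefficients against the defining relation $\sum_i h_i t^{d-i}=\sum_i f_{i-1}(t-1)^{d-i}$ yields the functional equation $t^d h(\Delta,1/t)=h(\Delta,t)$, which is exactly $h_i=h_{d-i}$. This is a standard computation and I would either carry it out in a few lines or simply cite Stanley's monograph.

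For the second assertion, I would specialize to $d=6$ and translate the three nontrivial symmetries $h_0=h_6$, $h_1=h_5$, $h_2=h_4$ (the equation $h_3=h_3$ is vacuous) into relations on the $f_i$'s using the inversion
\[h_i=\sum_{j=0}^{i}(-1)^{i-j}\binom{d-j}{i-j}f_{j-1}.\]
The identity $h_0=h_6$ is the Euler characteristic relation $f_5-f_4+f_3-f_2+f_1-f_0=0$. Expanding $h_1=h_5$ and $h_2=h_4$ produces two further linear relations among $f_0,\dots,f_5$. Treating $f_0,f_1,f_5$ as free parameters, I would then solve the resulting $3\times 3$ linear system for $f_2,f_3,f_4$; the calculation is routine and yields precisely the claimed formulas $f_2=f_5+2f_1-2f_0$, $f_3=3f_5+f_1-f_0$, and $f_4=3f_5$.

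The main obstacle is not really conceptual—both steps are classical and the overall result is standard textbook material. The only care needed is bookkeeping in the linear algebra at the end; I would double-check the specialized formulas by, for instance, substituting the $f$-vector of the boundary of the 6-simplex or of a known flag $5$-sphere such as $J_3(n)$ to make sure the relations hold.
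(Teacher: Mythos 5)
Your proposal is correct, and the paper itself provides no proof of this lemma—it is simply stated as ``the well-known Dehn--Sommerville relations for Eulerian complexes'' and used without argument. So there is nothing in the paper to compare against; what you have written is the standard textbook derivation that the author is implicitly invoking.

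Both halves of your sketch check out. Your ``upper-Euler'' identity follows from the Eulerian hypothesis exactly as you say: writing $\chi(\lk\sigma)=\sum_{\tau\supsetneq\sigma}(-1)^{|\tau|-|\sigma|-1}$ and rearranging gives $\sum_{\tau\supseteq\sigma}(-1)^{|\tau|-|\sigma|}=1-\chi(\lk\sigma)=(-1)^{d-|\sigma|}$, and the passage to $t^d h(\Delta,1/t)=h(\Delta,t)$ is the usual generating-function bookkeeping (Stanley, or Klee's original argument). For the $d=6$ specialization I verified the linear algebra: from $h_i=\sum_{j\le i}(-1)^{i-j}\binom{6-j}{i-j}f_{j-1}$ one gets $h_0=h_6\Leftrightarrow -f_0+f_1-f_2+f_3-f_4+f_5=0$, $h_1=h_5\Leftrightarrow 4f_0-4f_1+3f_2-2f_3+f_4=0$, and $h_2=h_4\Leftrightarrow -5f_0+5f_1-3f_2+f_3=0$; solving for $f_2,f_3,f_4$ in terms of $f_0,f_1,f_5$ yields exactly $f_2=f_5+2f_1-2f_0$, $f_3=3f_5+f_1-f_0$, $f_4=3f_5$. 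So the proposal is complete modulo the (acknowledged) routine computation in the first half, and it supplies a proof where the paper offers only a citation.
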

	
	A simplicial complex $\Delta$ is \emph{flag} if all minimal non-faces of $\Delta$, also called missing faces, have cardinality two; equivalently, $\Delta$ is the clique complex of its graph. A crucial property of flag complexes is described in the following lemma.
	\begin{lemma}\label{lm: flag prop}
		Let $\Delta$ be a flag complex on vertex set $V$.
		\begin{enumerate}
		\item If $W\subseteq V(\Delta)$, then $\Delta[W]$ is also flag. 
		\item If $\sigma$ is a face in $\Delta$, then $\lk(\sigma)=\Delta[V(\lk(\sigma))]$. In particular, all links in a flag complex are also flag.
		\item Any edge $\{v,v'\}$ in $\Delta$ satisfies the link condition $\lk(v)\cap \lk(v')=\lk(\{v,v'\})$. More generally, any face $\sigma=\sigma_1\cup \sigma_2$ in $\Delta$ ($\sigma_1$ and $\sigma_2$ are not necessarily disjoint) satisfies $\lk(\sigma)=\lk(\sigma_1)\cap \lk(\sigma_2)$.
		\end{enumerate}
	\end{lemma}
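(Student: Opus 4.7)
All three parts are best handled in the order given, and each one reduces in the end to the defining property of flagness: a subset of $V(\Delta)$ fails to be a face if and only if it contains some pair of vertices that do not span an edge.

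For part (1), I plan to observe that every non-face of $\Delta[W]$ is just a subset of $W$ that is a non-face of $\Delta$. By the flagness of $\Delta$ such a non-face must already contain a two-element non-face $\{u,v\}$ of $\Delta$, and since $\{u,v\}\subseteq W$ this two-element set is also a non-face of $\Delta[W]$. Hence every minimal non-face of $\Delta[W]$ has size two.

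For part (2), the inclusion $\lk(\sigma)\subseteq \Delta[V(\lk(\sigma))]$ is immediate from the definition: a face of $\lk(\sigma)$ is a face of $\Delta$ whose vertex set lies in $V(\lk(\sigma))$. For the reverse inclusion I take $\rho\in\Delta$ with $\rho\subseteq V(\lk(\sigma))$ and aim to show $\rho\cup\sigma\in\Delta$. If not, flagness produces a two-element non-face $\{u,v\}\subseteq \rho\cup\sigma$, and a three-case analysis (both in $\sigma$, both in $\rho$, one in each) gives a contradiction: the first two cases contradict $\sigma,\rho\in\Delta$, and the mixed case contradicts the fact that every vertex $v\in V(\lk(\sigma))$ satisfies $\{v\}\cup\sigma\in\Delta$. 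Once this is established, the flagness of $\lk(\sigma)$ follows by combining the identification $\lk(\sigma)=\Delta[V(\lk(\sigma))]$ with part (1).

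For part (3), the inclusion $\lk(\sigma)\subseteq \lk(\sigma_1)\cap\lk(\sigma_2)$ is automatic since $\sigma=\sigma_1\cup\sigma_2$. For the converse, I take $\tau\in \lk(\sigma_1)\cap \lk(\sigma_2)$ and again try to show $\tau\cup\sigma\in\Delta$ by ruling out a two-element non-face in $\tau\cup\sigma_1\cup\sigma_2$. The available facts $\tau\cup\sigma_1\in\Delta$, $\tau\cup\sigma_2\in\Delta$, and $\sigma=\sigma_1\cup\sigma_2\in\Delta$ cover every pair of vertices drawn from $\tau\cup\sigma$, so no two-element non-face can occur and flagness forces $\tau\cup\sigma\in\Delta$. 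I do not anticipate a serious obstacle anywhere in the proof; the entire lemma is a clean unwinding of the flag condition, and the only thing that requires care is arranging the case analysis in part (2) and part (3) so that the two-element non-face argument is applied uniformly.
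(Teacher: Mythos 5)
Your proof is correct and, for part (3), takes essentially the same route as the paper: you verify that every pair of vertices in $\tau\cup\sigma$ spans an edge (using $\tau\cup\sigma_1\in\Delta$, $\tau\cup\sigma_2\in\Delta$, and $\sigma\in\Delta$) and then invoke flagness, which is exactly the paper's ``complete graph on $V(\tau)\cupdot V(\sigma)$'' argument. The paper delegates parts (1) and (2) to Lemma~5.2 of Nevo--Petersen, whereas you supply short direct arguments for them; those arguments are standard and correct, so the only difference is that your write-up is self-contained where the paper cites a reference.
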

	\begin{proof}
		The first two claims are essentially Lemma 5.2 in \cite{NP}. We prove the last claim. If $\sigma=\sigma_1\cup \sigma_2$, then $\lk(\sigma)\subseteq\lk(\sigma_1)\cap \lk(\sigma_2)$ always holds. Conversely, let $\tau \in \lk(\sigma_1)\cap \lk(\sigma_2)$. Every vertex of $\tau$ is connected to every vertex in $\sigma_1$ and $\sigma_2$. Also since $\sigma=\sigma_1\cup\sigma_2$ is a face of $\Delta$, the complete graph on $V(\tau)\cupdot V(\sigma)$ is a subgraph of $G(\Delta)$. So $\Delta$ is flag implies that $\sigma\cup \tau$ forms a face of $\Delta$, i.e., $\tau\in \lk(\sigma)$. Hence $\lk(\sigma_1)\cap \lk(\sigma_2)\subseteq \lk(\sigma)$.
	\end{proof}
	
 Finally, we describe a class of flag odd-dimensional simplicial spheres. For $m\geq 2$ and $n\geq 4m$, define $J_m(n)$ as the flag simplicial $(2m-1)$-sphere on $n$ vertices obtained as the join of $m$ circles, each one of length either $\lfloor \frac{n}{m} \rfloor$ or $\lceil \frac{n}{m}\rceil$. In particular, $J_m(4m)$ is the boundary complex of the $2m$-dimensional cross-polytope. If we let $J_m^*(n+2)$ be the suspension of $J_m(n)$ for $n\geq 4m$, then $J_m^*(n+2)$ is a flag $2m$-dimensional sphere. Write $n=qm+k$ where $q, k$ are integers and $0\leq k< m$. We have that $$f_1(J_m(n))=\frac{1}{2}\Big( (n-q+1)k(q+1)+(n-q+2)(m-k)q\Big) =n+\frac{m-1}{2m}n^2+\frac{k^2-km}{2m}\leq  n+\frac{m-1}{2m}n^2$$ $$\text{and}\;\;f_{2m}(J_m^*(n+2))=2f_{2m-1}(J_m(n))=2(q+1)^kq^{m-k}\leq 2(\frac{n}{m})^m.$$ Both of the above inequalities hold as equalities if and only if $n$ is a multiple of $m$.
The following lemma \cite[Lemma 5.6]{Z} gives a necessary and sufficient condition for a flag normal pseudomanifold to be the join of two of its face links.
\begin{lemma}\label{lm: join of face links}
	Let $\Delta$ be a flag normal $(d-1)$-pseudomanifold and let $\sigma=\sigma_1\cupdot \sigma_2$ be a facet of $\Delta$. Then $V(\lk(\sigma_1))\cupdot V(\lk(\sigma_2))=V(\Delta)$ if and only if $\Delta=\lk(\sigma_1)*\lk(\sigma_2)$.
\end{lemma}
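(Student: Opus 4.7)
The direction $(\Leftarrow)$ is immediate from the definition of the join, so I focus on $(\Rightarrow)$. Setting $L_i := V(\lk(\sigma_i))$ and $K := \lk(\sigma_1) * \lk(\sigma_2)$, the plan is to prove $\Delta = K$ by establishing the two inclusions separately. The inclusion $\Delta \subseteq K$ is the soft half: for any $\tau \in \Delta$, the hypothesis $V(\Delta) = L_1 \cupdot L_2$ partitions $\tau$ as $(\tau \cap L_1) \cupdot (\tau \cap L_2)$, and Lemma \ref{lm: flag prop}(2) identifies $\Delta[L_i]$ with $\lk(\sigma_i)$, so each piece lies in the corresponding link and $\tau \in K$.

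For the reverse inclusion $K \subseteq \Delta$, the plan is a walk in the dual graph of $K$. First I would set up two preliminary facts. (a)~$K$ is itself a $(d-1)$-dimensional pseudomanifold: since the class of normal pseudomanifolds is closed under links, each factor $\lk(\sigma_i)$ is a normal pseudomanifold, and because every ridge of a join $A*B$ is obtained by deleting one vertex from a facet—so it is the union of a facet of one factor with a ridge of the other—a direct check shows every ridge of $K$ lies in exactly two facets. (b)~$\sigma = \sigma_2 \cupdot \sigma_1$ is a common facet of $\Delta$ and $K$: the maximality of $\sigma$ in $\Delta$ forces $\sigma_2$ and $\sigma_1$ to be facets of $\lk(\sigma_1)$ and $\lk(\sigma_2)$ respectively. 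With these in hand, strong connectedness of $K$—which follows from strong connectedness of each normal pseudomanifold factor together with the product-like description of the dual graph of a join—supplies, for any facet $F$ of $K$, a sequence $\sigma = F_0, F_1, \dots, F_k = F$ of ridge-adjacent facets in $K$. An induction along this path transfers $\Delta$-membership one step at a time: if $F_i \in \Delta$ and $R = F_i \cap F_{i+1}$, the pseudomanifold property of $\Delta$ produces a second facet $F' \in \Delta$ containing $R$; but $F' \subseteq K$ is then a facet of $K$ containing $R$ and distinct from $F_i$, so the pseudomanifold property of $K$ forces $F' = F_{i+1}$. Thus $F_{i+1} \in \Delta$, and eventually $F \in \Delta$.

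The main obstacle I anticipate is verifying the preliminary facts about $K$, especially its strong connectedness, which is not stated explicitly in the excerpt. This can either be black-boxed as a standard property of normal pseudomanifolds (and hence of their joins), or proved by a short induction on dimension using that vertex links of normal pseudomanifolds are themselves normal pseudomanifolds. Once the pseudomanifold structure and strong connectedness of $K$ are in place, the ridge-walking step is mechanical, and the two inclusions together give $\Delta = K = \lk(\sigma_1) * \lk(\sigma_2)$.
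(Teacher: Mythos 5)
The paper does not actually prove this lemma---it is cited from \cite[Lemma 5.6]{Z}---so there is no in-paper argument to compare against. Evaluating your proposal on its own: the argument is correct. Writing $L_i=V(\lk(\sigma_i))$ and $K=\lk(\sigma_1)*\lk(\sigma_2)$, the inclusion $\Delta\subseteq K$ is exactly right: the hypothesis $V(\Delta)=L_1\cupdot L_2$ splits any $\tau\in\Delta$ as $(\tau\cap L_1)\cupdot(\tau\cap L_2)$, and Lemma \ref{lm: flag prop}(2) gives $\lk(\sigma_i)=\Delta[L_i]$, so each piece is a face of the corresponding link. For $K\subseteq\Delta$, the dual-graph walk is sound: once you know $\sigma$ is a common facet of $\Delta$ and $K$, that $K$ is a $(d-1)$-pseudomanifold, and that $K$ is strongly connected, the set of facets of $K$ lying in $\Delta$ is nonempty and closed under ridge-adjacency in $K$ (because, given $\Delta\subseteq K$, the unique second $\Delta$-facet through a ridge $R$ of $F_i$ must also be a facet of $K$ through $R$, hence must be $F_{i+1}$), so it is everything. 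Two small points worth making explicit: $R=F_i\cap F_{i+1}$ is a ridge of $\Delta$ because it is a $(d-2)$-face contained in the $\Delta$-facet $F_i$, which is what licenses invoking the pseudomanifold property of $\Delta$; and the preliminary facts you defer---normal pseudomanifolds are strongly connected, the join of two pure pseudomanifolds is a pseudomanifold, and the dual graph of a join is the Cartesian product of the dual graphs---are all standard and admit short proofs by induction on dimension, so black-boxing them is reasonable. I would also highlight the clean division of labor your proof reveals: flagness is used only for $\Delta\subseteq K$, while the pseudomanifold and normality hypotheses are used only for $K\subseteq\Delta$.
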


\section{Proof of the main theorem}
The goal of this section is to prove the upper bound conjecture for flag homology 5-manifolds, see Theorem \ref{thm: main result}. Let $\Delta$ be a flag $(2m-1)$-dimensional pseudomanifold. Our strategy is to use the inclusion-exclusion principle to give an upper bound on $\sum_{v\in\sigma} f_0(\lk(v))$, where $\sigma$ is a facet of $\Delta$, and then use this result to obtain an upper bound of $f_{2m-1}(\Delta)$. We begin with a generalization of Lemmas 5.2 and 5.3 in \cite{Z}.

\begin{lemma} \label{lemma: sum of codim 2 face links}
	Let $\Delta$ be a flag $(d-1)$-pseudomanifold with $n$ vertices and let $\sigma$ be a facet of $\Delta$. Then $\sum_{\tau\subset\sigma, |\tau|=d-2} f_0(\lk(\tau))= |\cup_{\tau\subset\sigma, |\tau|=d-2}V(\lk(\tau))|+2d(d-2)$.
\end{lemma}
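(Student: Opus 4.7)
The plan is to establish the identity by a double-counting argument. For each vertex $v\in V(\Delta)$, set $m_v:=\#\{\tau\subset\sigma : |\tau|=d-2,\ v\in V(\lk(\tau))\}$. Then
\[
\sum_{\tau\subset\sigma,\, |\tau|=d-2} f_0(\lk(\tau)) \;=\; \sum_{v} m_v
\quad\text{and}\quad
\Bigl|\bigcup_{\tau\subset\sigma,\,|\tau|=d-2} V(\lk(\tau))\Bigr| \;=\; \#\{v:m_v\geq 1\},
\]
so the lemma reduces to showing $\sum_{v:\,m_v\geq 1}(m_v-1)=2d(d-2)$. I would split this sum into contributions from $v\in\sigma$ and from $v\notin\sigma$.

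The first contribution is immediate. For $v\in\sigma$, the condition $v\in V(\lk(\tau))$ is simply $v\notin\tau$, giving $m_v=\binom{d-1}{d-2}=d-1$ and a total of $d(d-2)$. For $v\notin\sigma$, the flag link condition (Lemma~\ref{lm: flag prop}) says $v\in V(\lk(\tau))$ iff $v$ is adjacent in $\Delta$ to every vertex of $\tau$, so $m_v=\binom{|S_v|}{d-2}$, where $S_v:=\{w\in\sigma:\{v,w\}\in\Delta\}$. Flagness together with $\sigma$ being a facet forces $|S_v|\leq d-1$ (otherwise $\sigma\cup\{v\}$ would be a face), and inspecting the remaining possibilities shows that only $|S_v|=d-1$ yields a nonzero contribution, namely $m_v-1=d-2$.

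The remaining task, which is the heart of the lemma and where the pseudomanifold hypothesis enters, is to show that there are exactly $d$ vertices $v\notin\sigma$ with $|S_v|=d-1$. I would exhibit this by constructing a bijection $v\mapsto S_v$ between such vertices and the $d$ ridges of $\sigma$. Surjectivity is direct: for each ridge $\sigma\setminus\{w\}$, the pseudomanifold axiom supplies a unique second facet, whose extra vertex necessarily lies outside $\sigma$ and is adjacent to all of $\sigma\setminus\{w\}$. Injectivity uses flagness (plus purity) to promote $S_v\cup\{v\}$ to a facet, and then again invokes the two-facet axiom to pin down $v$ from $S_v$.

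The only subtle point is keeping the flag and pseudomanifold hypotheses cleanly separated in the bijection: flagness is what bounds $|S_v|\leq d-1$ and upgrades adjacency to a face, while the pseudomanifold axiom is what pins the count to exactly two facets per ridge. Once these are disentangled, combining the two contributions $d(d-2)+d(d-2)$ gives the claimed $2d(d-2)$, and everything else is bookkeeping.
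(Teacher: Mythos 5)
Your proposal is correct, and it takes a genuinely different route from the paper. The paper runs the full inclusion-exclusion on the $\binom{d}{2}$ vertex sets $V(\lk(\tau))$: by the flag link condition (Lemma~\ref{lm: flag prop}) every nonempty intersection $\bigcap_{i\in I}V(\lk(\sigma_i))$ is $V(\lk(\cup_{i\in I}\sigma_i))$, which is empty or a two-point ridge link, and then the excess collapses to $2\binom{d}{d-1}\sum_{k\geq 2}(-1)^k\binom{d-1}{k}=2d(d-2)$. You instead count the multiplicity $m_v$ of each vertex directly and reduce the identity to $\sum_{m_v\geq 1}(m_v-1)=2d(d-2)$, splitting by whether $v\in\sigma$. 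The in-$\sigma$ contribution $d(d-2)$ is a triviality, and flagness confines the out-of-$\sigma$ contribution to the vertices $v$ with $|S_v|=d-1$; the pseudomanifold axiom then puts these in bijection with the $d$ ridges of $\sigma$, giving another $d(d-2)$. The two arguments use exactly the same hypotheses in exactly the same roles (flagness to upgrade adjacency to membership in a link, pseudomanifold to pin the ridge count at two), but your vertex-by-vertex decomposition avoids the binomial telescoping and makes the provenance of the constant $2d(d-2)$ transparent as two equal contributions of $d(d-2)$: one from $\sigma$ itself, one from the $d$ ``apex'' vertices of the facets adjacent to $\sigma$. The only place I'd tighten the write-up is the injectivity/surjectivity of $v\mapsto S_v$: you should say explicitly that $|S_v|\leq d-1$ forces $S_v$ to equal (not merely contain) the ridge $\rho$ when $v$ comes from the second facet over $\rho$, and that two preimages of the same $\rho$ would give a third facet over $\rho$, contradicting the pseudomanifold axiom.
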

\begin{proof}
	Let $\ell=\binom{d}{2}$ and let $V_1=V(\lk(\sigma_1)),\dots, V_\ell=V(\lk(\sigma_\ell))$ denote the vertex sets of the links of $(d-3)$-faces of $\sigma$. Note that $\sigma_1\cup \sigma_2$ is either a $(d-1)$-face or a $(d-2)$-face. By Lemma \ref{lm: flag prop}, $V_i\cap V_j=V(\lk(\sigma_1\cup \sigma_2))$ is either the empty set, or the link of a ridge, which is the disjoint union of two vertices (since $\Delta$ is a pseudomanifold). Hence by the inclusion-exclusion principle,
	\begin{align*}
	\begin{split}
	\sum_{i=1}^{\ell} |V_i|&=|\cup_{i=1}^{\ell} V_i|+\sum_{1\leq i<j\leq \ell} |V_i\cap V_j|-\sum_{1\leq i<j<k\leq \ell} |V_i\cap V_j\cap V_k|+\dots\\
	&=|\cup_{i=1}^{\ell} V_i|+2\cdot\Big(\#\{(i,j):|\sigma_i\cup\sigma_j|=d-1\}-\#\{(i,j,k):|\sigma_i\cup\sigma_j\cup \sigma_k|=d-1\}+\dots\Big)\\
	&= |\cup_{i=1}^{\ell} V_i|+2\cdot \binom{d}{d-1}\cdot\left(\binom{d-1}{2}-\binom{d-1}{3}+\dots +(-1)^{d-1}\binom{d-1}{d-1}\right)\\
	&=|\cup_{i=1}^{\ell} V_i|+2d(d-2).
	\end{split}
	\end{align*}
\end{proof}

In the following, let $\Delta$ be a flag $(2m-1)$-pseudomanifold on $n$ vertices, $\sigma$ a facet of $\Delta$, and $\tau$ a $(k-1)$-face in $\sigma$, where $k\leq 2m-2$. We denote by $W_\tau$ the set of vertices in $\Delta$ connected to $\tau$ but not connected to any vertices in $\sigma\backslash\tau$. Define $a_k:=\sum_{\tau\subset\sigma, |\tau|=k}f_0(\lk(\tau))$ and $b_k:=\sum_{\tau\subset\sigma, |\tau|=k}f_0(W_\tau)$. To give an explicit formula of $a_k$ in terms of $a_{2m-2}$ and $b_i$'s, we need the following identity.
\begin{lemma}\label{lm: identity}
	$\sum_{i=k+1}^{2m-2}(-1)^{i-k+1}\binom{i}{k}\binom{2m-2}{i}=\binom{2m-2}{k}$.
\end{lemma}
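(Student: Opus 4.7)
The plan is to recognize the left-hand side as a disguised alternating sum of binomial coefficients, which will vanish after pulling out a common factor. Writing $N = 2m-2$ for brevity, the asserted identity is
\[
\sum_{i=k+1}^{N}(-1)^{i-k+1}\binom{i}{k}\binom{N}{i}=\binom{N}{k},
\]
and my first step is to fold the right-hand side into the sum. The $i=k$ term of the same summand equals $(-1)^{1}\binom{k}{k}\binom{N}{k}=-\binom{N}{k}$, so the identity is equivalent to
\[
\sum_{i=k}^{N}(-1)^{i-k}\binom{i}{k}\binom{N}{i}=0.
\]

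Next I would apply the standard subset-of-a-subset identity
\[
\binom{N}{i}\binom{i}{k}=\binom{N}{k}\binom{N-k}{i-k},
\]
which is immediate from the factorial definition. Pulling $\binom{N}{k}$ out of the sum and reindexing by $j=i-k$ converts the claim into
\[
\binom{N}{k}\sum_{j=0}^{N-k}(-1)^{j}\binom{N-k}{j}=0.
\]
The inner sum is $(1-1)^{N-k}$ by the binomial theorem, and since the hypothesis of the lemma in the paper guarantees $k\leq 2m-3$, we have $N-k=2m-2-k\geq 1$, so the sum vanishes. This completes the argument.

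There is no real obstacle here; the only thing worth verifying carefully is the sign bookkeeping when absorbing the $i=k$ term into the sum and the fact that $N-k\geq 1$ in the intended range of $k$, which is exactly what makes the alternating sum collapse. If desired, one could also give a bijective/inclusion-exclusion interpretation: the identity counts, with signs, the pairs $(S,T)$ with $T\subseteq S\subseteq[N]$, $|T|=k$, weighted by $(-1)^{|S|-k}$, and grouping by $T$ reduces the count to $(1-1)^{N-k}$ per fixed $T$. Either formulation suffices, but the algebraic route outlined above is the most economical and fits the computational spirit of the surrounding section.
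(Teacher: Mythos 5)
Your argument is correct and is essentially the same as the paper's: both proofs rest on the subset-of-a-subset identity $\binom{N}{i}\binom{i}{k}=\binom{N}{k}\binom{N-k}{i-k}$ followed by the alternating binomial identity $(1-1)^{N-k}=0$. The only (cosmetic) difference is that you absorb the $i=k$ term into the sum so the whole thing vanishes, whereas the paper peels off the $j=0$ term from the full alternating sum to get $-1$.
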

\begin{proof}
	Note that \begin{align*}
		\begin{split}
		\sum_{i=k+1}^{2m-2}(-1)^{i-k+1}\binom{i}{k}\binom{2m-2}{i}&=\sum_{i=k+1}^{2m-2}(-1)^{i-k+1}\binom{2m-2}{k}\binom{2m-2-k}{i-k}\\
		&=\binom{2m-2}{k}\sum_{j=1}^{2m-2-k}(-1)^{j-1}\binom{2m-2-k}{j}.
		\end{split}
	\end{align*}Substituting $x=1$ in both sides of the identity $(1-x)^{2m-2-k}=\sum_{j=0}^{2m-2-k}(-1)^{j}\binom{2m-2-k}{j}x^j$ yields that $\sum_{j=1}^{2m-2-k}(-1)^{j}\binom{2m-2-k}{j}=-1$. Hence the left hand side of the above equation equals $\binom{2m-2}{k}$. 
\end{proof}
\begin{lemma}\label{lm: a_k, b_k}
 	Let $a_k$, $b_k$ be defined as above. For $1< k\leq 2m-2$, \begin{equation}\label{formula: a_k}
 	a_k=\binom{2m-2}{k}a_{2m-2}-4m(2m-2-k)\binom{2m-1}{k}+\sum_{i=k}^{2m-3}\binom{i}{k}b_i.
 	\end{equation}
\end{lemma}
\begin{proof}
	 Since $\Delta$ is flag, the link $\lk(\tau)$ is the induced subcomplex of $\Delta$ on $V(\lk(\tau))$. It follows that $|\cup_{v\in \sigma\backslash\tau}V(\lk(\tau\cup v))|=f_0(\lk(\tau))-f_0(W_\tau)$. By the inclusion-exclusion principle,
	 \begin{equation*}
	 f_0(\lk(\tau))-f_0(W_\tau)=\sum_{v\subset \sigma\backslash\tau}
	 f_0(\lk(\tau\cup v))-\sum_{e\subset \sigma\backslash\tau}f_0(\lk(\tau\cup e))+\dots +(-1)^{k}\sum_{\delta\subset \sigma\backslash\tau, |\delta|=2m-k-1}f_0(\lk(\tau\cup\delta)).
	 \end{equation*}
	Take the sum over all $(k-1)$-faces in $\sigma$ to obtain that
	 \begin{equation}\label{eq:1}
	 \begin{split}
	 a_k-b_k&=\binom{k+1}{k}a_{k+1}-\binom{k+2}{k}a_{k+2}+\cdots+(-1)^{k}\binom{2m-1}{k}a_{2m-1}.
	 \end{split}
	 \end{equation}
	 First note that the formula (\ref{formula: a_k}) of $a_k$ holds for $k=2m-2$. Assume that the formula (\ref{formula: a_k}) holds for all $a_i$ with $2m-2\geq i>k$. Now we will express $a_k$ as a linear combination of $a_{2m-2}, b_k, b_{k+1}, \dots, b_{2m-3}$ and 1. By expanding each $a_{k+1}, \dots, a_{2m-2}$ in (\ref{eq:1}) using (\ref{formula: a_k}),  the coefficient of $a_{2m-2}$ in $a_k$ is \[\mathrm{coeff}_{a_k}a_{2m-2}=\sum_{i=k+1}^{2m-2}(-1)^{i-k+1}\binom{i}{k}\binom{2m-2}{i}=\binom{2m-2}{k}\]
	 by Lemma \ref{lm: identity}. Next we consider the constant term in the expression of $a_{2m-2}$. Since $\Delta$ is a pseudomanifold, every ridge is contained in exactly two facets, and hence $a_{2m-1}=\binom{2m}{2m-1}\cdot 2=4m$. We use the fact that \[(2m-2-k)\binom{2m-1}{k}=\binom{2m-2}{k}(2m-1)-\binom{2m-1}{k}\] along with our inductive hypothesis and Lemma \ref{lm: identity} to obtain that
	 \begin{align*}
	 \begin{split}
	 \mathrm{coeff}_{a_k}1&=4m\cdot\sum_{i=k+1}^{2m-2}(-1)^{i-k+1}\binom{i}{k}\left(\binom{2m-2}{i}\cdot(2m-1)-\binom{2m-1}{i}\right)+(-1)^{2m-k}\binom{2m-1}{k}\cdot 4m\\
	 &=4m\cdot \left(\binom{2m-2}{k}\cdot(2m-1)-\binom{2m-1}{k}\right)\\
	 &=4m(2m-2-k)\binom{2m-1}{k}.
	 \end{split}
	 \end{align*}
	 Finally, denote by $c_k$ the remaining terms in $a_k$, which consists of a linear combination of $b_i$. By induction,
	 \begin{align*}
	 \begin{split}
	 c_k&=b_k+\sum_{i=k+1}^{2m-3} (-1)^{i-k+1}\binom{i}{k}\sum_{j=i}^{2m-3}\binom{j}{i}b_j\\
	 &=b_k+\sum_{j=k+1}^{2m-3}\binom{j}{k}b_j\sum_{i=k+1}^{j}(-1)^{i-k+1}\binom{j-k}{i-k}\\
	 &=b_k+\sum_{j=k+1}^{2m-3}\binom{j}{k}b_j\\
	 &=\sum_{j=k}^{2m-3}\binom{j}{k}b_j.
	 \end{split}
	 \end{align*} 
	 This finishes proving the claim.
\end{proof}

We are now ready to estimate $\sum_{v\in\sigma} f_0(\lk(v))$ ($\sigma$ is a facet) using the above lemma.
\begin{lemma}\label{lemma: sum of vertex links}
	Let $m\geq 2$ and let $\Delta$ be a flag $(2m-1)$-pseudomanifold with $n$ vertices. Also let $\sigma$ be a facet of $\Delta$. Then $\sum_{v\in\sigma} f_0(\lk(v))\leq 2(m-1)n +4m$. Furthermore if equality holds, then 
	\begin{itemize}
		\item $\cup_{v\in \sigma\backslash\tau} V(\lk(v\cup\tau))=V(\lk(\tau))$ for any face $\tau\subset\sigma$ with $|\sigma\backslash\tau|\geq 3$;
		\item $\cup_{\delta\subset \sigma, |\delta|=2m-2}V(\lk(\delta))=V(\Delta)$.
	\end{itemize}
\end{lemma}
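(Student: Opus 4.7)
The plan is to derive an exact closed-form expression for $a_1 := \sum_{v\in\sigma} f_0(\lk(v))$ of the shape ``$2(m-1)n + 4m$ minus a nonnegative combination of $b_0,\ldots,b_{2m-3}$,'' from which both the inequality and its equality cases will read off immediately.

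First, I would extend Lemma~\ref{lm: a_k, b_k} one step further down to $k=1$. The relation $a_1 - b_1 = \sum_{i=2}^{2m-1}(-1)^i\, i\, a_i$ is the $k=1$ instance of identity~\eqref{eq:1} and has the same proof; substituting the closed-form expressions of Lemma~\ref{lm: a_k, b_k} for $a_2,\ldots,a_{2m-2}$ together with $a_{2m-1} = 4m$, and applying Lemma~\ref{lm: identity} to collect the coefficient of $a_{2m-2}$, I expect to arrive at
\[a_1 = (2m-2)\,a_{2m-2} - 4m(2m-3)(2m-1) + \sum_{i=1}^{2m-3} i\, b_i.\]
Second, I would sharpen Lemma~\ref{lemma: sum of codim 2 face links} by identifying the vertices that belong to $U := \bigcup_{\tau\subset\sigma,\,|\tau|=2m-2} V(\lk(\tau))$. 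For each $v\in V(\Delta)\setminus\sigma$, let $T_v\subseteq \sigma$ denote its neighborhood in $\sigma$. The union $U$ contains every vertex of $\sigma$ (any codim-$2$ face of $\sigma$ not containing the vertex witnesses this, using $m\geq 2$), and a vertex $v\notin\sigma$ belongs to $U$ iff $|T_v|\geq 2m-2$. Since each $v$ with $|T_v| = j \leq 2m-3$ contributes exactly $1$ to $f_0(W_{T_v})$ and to nothing else, the complement $V(\Delta)\setminus U$ has cardinality exactly $\sum_{j=0}^{2m-3} b_j$, so Lemma~\ref{lemma: sum of codim 2 face links} refines to
\[a_{2m-2} = |U| + 4m(2m-2) = n - \sum_{j=0}^{2m-3} b_j + 4m(2m-2).\]

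Substituting this into the formula for $a_1$ and collapsing the constants via the identity $(2m-2)\cdot 4m(2m-2) - 4m(2m-3)(2m-1) = 4m$ should yield
\[a_1 = 2(m-1)\,n + 4m - (2m-2)\,b_0 - \sum_{j=1}^{2m-3}(2m-2-j)\,b_j.\]
Because each coefficient $(2m-2-j)$ is strictly positive for $j \leq 2m-3$ and every $b_j\geq 0$, the inequality $a_1 \leq 2(m-1)n + 4m$ follows at once. Equality forces $b_j = 0$ for all $0 \leq j \leq 2m-3$; by the definition of $W_\tau$, this is equivalent to $f_0(W_\tau) = 0$ for every $\tau\subset\sigma$ with $|\sigma\setminus\tau|\geq 3$, i.e., $\bigcup_{v\in\sigma\setminus\tau} V(\lk(v\cup\tau)) = V(\lk(\tau))$ for every such $\tau$, which is the first listed condition; combined with $|V(\Delta)\setminus U| = \sum_{j=0}^{2m-3} b_j = 0$ it also yields the second listed condition $U = V(\Delta)$.

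The main obstacle I anticipate is purely bookkeeping: I need to verify carefully that the alternating sums in the extension of Lemma~\ref{lm: a_k, b_k} to $k=1$ telescope with the correct signs (the identities of Lemma~\ref{lm: identity} handle this, but sign errors are easy to make), and I must confirm that the partition of $V(\Delta)\setminus\sigma$ by the size of $T_v$ cleanly identifies $V(\Delta)\setminus U$ with $\bigsqcup_{j \leq 2m-3}\{v:|T_v| = j\}$. Once those two mechanical checks are in place, all that remains is the coefficient collapse noted above, which is immediate.
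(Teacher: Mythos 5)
Your proof is correct and runs on the same inclusion--exclusion engine as the paper's (the identity~\eqref{eq:1}, Lemma~\ref{lm: a_k, b_k}, and Lemma~\ref{lemma: sum of codim 2 face links}), but it packages the computation more tightly. The paper expands $a_1=|\cup_{v\in\sigma}V(\lk(v))|+a_2-a_3+\cdots-a_{2m-1}$ and then separately invokes $|\cup_{v}V(\lk(v))|\leq n$ and $|\cup_{\delta}V(\lk(\delta))|+\sum_{i\geq 2}b_i\leq n$, so the equality case must be read off from three inequalities; in particular the fact that equality forces $W_\tau=\emptyset$ for $|\tau|=1$ (needed for the first bullet, which covers all $|\tau|\leq 2m-3$) is left implicit and has to be extracted from $\cup_\delta V(\lk(\delta))=V(\Delta)$ together with disjointness. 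You instead extend Lemma~\ref{lm: a_k, b_k} down to $k=1$ -- its downward induction and the appeal to Lemma~\ref{lm: identity} carry over verbatim -- and upgrade Lemma~\ref{lemma: sum of codim 2 face links} to the exact identity $a_{2m-2}=n-\sum_{j=0}^{2m-3}b_j+4m(2m-2)$ by noting that $V(\Delta)\setminus\bigcup_{|\delta|=2m-2}V(\lk(\delta))$ is precisely the disjoint union of the $W_\tau$ over $|\tau|\leq 2m-3$ (your partition of $V(\Delta)\setminus\sigma$ by $|T_v|$ is exactly the right observation, and $\sigma$ is indeed contained in the union). Combining gives the single closed form $a_1=2(m-1)n+4m-(2m-2)b_0-\sum_{j=1}^{2m-3}(2m-2-j)b_j$ with all coefficients strictly positive, from which both the bound and the equality conditions (every $b_j=0$, hence both bullets) follow in one step. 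So this is the same method, streamlined into one exact identity; the payoff is a cleaner and slightly more airtight treatment of the equality case.
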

\begin{proof}
    By the inclusion-exclusion principle,
    
	\begin{align*}
	\begin{split}
	a_1&=|\cup_{v\in \sigma}V(\lk(v))|+a_2-a_3+\cdots -a_{2m-1}\\
	&\stackrel{(*)}{=}|\cup_{v\in \sigma}V(\lk(v))|+\Big(a_{2m-2}-4m(2m-1)\Big)\cdot\sum_{i=2}^{2m-2}(-1)^i\binom{2m-2}{i}\\
	&\quad +4m\sum_{i=2}^{2m-2}(-1)^i\binom{2m-1}{i}+\sum_{k=2}^{2m-3}(-1)^k\sum_{i=k}^{2m-3}\binom{i}{k}b_i\\
	&=|\cup_{v\in \sigma}V(\lk(v))| +\Big(a_{2m-2}-4m(2m-1)\Big)(2m-3)+4m\big(2m-2\big)+\sum_{i=2}^{2m-3}\left(\sum_{k=2}^i (-1)^k\binom{i}{k}\right)b_i\\
	&\stackrel{(**)}{=}|\cup_{v\in \sigma}V(\lk(v))|+(2m-3)\Big(\left|\cup_{\delta\subset \sigma, |\delta|=2m-2}V(\lk(\delta))\right|+8m(m-1)\Big)\\
	&\quad-4m(4m^2-10m+5)+\sum_{i=2}^{2m-3}(i-1)b_i\\
	&=|\cup_{v\in \sigma}V(\lk(v))|+4m+(2m-3)\left|\cup_{\delta\subset \sigma, |\delta|=2m-2}V(\lk(\delta))\right|+\sum_{i=2}^{2m-3}(i-1)b_i.
	\end{split}	
	\end{align*}
	Here in (*) we use Lemma \ref{lm: a_k, b_k} and in (**) we use Lemma \ref{lemma: sum of codim 2 face links}. Note that all $W_\tau$ are disjoint vertex sets for different faces $\tau\subset\sigma$. Hence \[\left|\cup_{\delta\subset \sigma, |\delta|=2m-2}V(\lk(\delta))\right|+ \sum_{i=2}^{2m-3} b_i\leq n.\] So we conclude that
	\[a_1\leq n+4m+ (2m-3)\Big(n-\sum_{i=2}^{2m-3}b_i\Big)+\sum_{i=2}^{2m-3}(i-1)b_i\leq (2m-2)n+4m.\]
	When equality holds, we must have that $\cup_{v\in\sigma} V(\lk(v))=\cup_{\delta\subset \sigma, |\delta|=2m-2}V(\lk(\delta))=V(\Delta)$ and $b_i=0$ for $2\leq i\leq 2m-3$. The second condition ``$b_i=0$" is equivalent to  $\cup_{v\in \sigma\backslash\tau} V(\lk(v\cup\tau))=V(\lk(\tau))$ for any face $\tau\subset\sigma$ with $|\sigma\backslash\tau|\geq 3$.
\end{proof}

The next proposition gives the relation between even-dimensional and odd-dimensional upper bound conjecture for flag pseudomanifolds.
\begin{proposition}\label{theorem: upper bound inequality}
	Let $\Delta$ be a flag  $(2m-1)$-pseudomanifold with $n$ vertices. Assume that for each vertex $v\in\Delta$,
	\begin{equation}\label{eq: even-dimension conjecture}
		f_{2m-2}(\lk(v))\leq f_{2m-2}(J_{m-1}^*(f_0(\lk(v)))).
	\end{equation}
	Then $f_{i}(\Delta)\leq f_{i}(J_m(n))$ for $i=2m-2, 2m-1$.
\end{proposition}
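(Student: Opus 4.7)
The plan is to reduce the statement to a bound on $f_{2m-1}(\Delta)$, then combine the hypothesis with Lemma~\ref{lemma: sum of vertex links} via a double-counting argument.

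First, I would observe that since $\Delta$ is a $(2m-1)$-pseudomanifold, every ridge of $\Delta$ is contained in exactly two facets while every facet contains $2m$ ridges, so double-counting (facet, ridge) pairs yields $f_{2m-2}(\Delta) = m \cdot f_{2m-1}(\Delta)$; the same identity holds for $J_m(n)$. Consequently, it suffices to show $f_{2m-1}(\Delta) \leq f_{2m-1}(J_m(n))$.

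Next, I would use the identity $\sum_{v} f_{2m-2}(\lk(v)) = 2m \cdot f_{2m-1}(\Delta)$ together with the hypothesis \eqref{eq: even-dimension conjecture} applied vertex-by-vertex to obtain
\[
2m \cdot f_{2m-1}(\Delta) \;\leq\; \sum_{v} g(f_0(\lk(v))),
\]
where $g(x) := f_{2m-2}(J_{m-1}^*(x)) = 2 \prod_{i=1}^{m-1} \ell_i$, for positive integers $\ell_1,\ldots,\ell_{m-1}$ summing to $x-2$ and as equal as possible. In particular, $g$ is a convex integer-valued function of polynomial degree $m-1$, and equality in the hypothesis is achieved at every vertex of $J_m(n)$.

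Then, by Lemma~\ref{lemma: sum of vertex links} applied to each facet $\sigma$, one has $\sum_{v \in \sigma} f_0(\lk(v)) \leq 2(m-1)n + 4m$, a bound that is saturated at every facet of $J_m(n)$ with equal-length circles. Summing over all facets of $\Delta$ yields
\[
\sum_{v} f_0(\lk(v)) \cdot f_{2m-2}(\lk(v)) \;\leq\; \bigl(2(m-1)n + 4m\bigr)\, f_{2m-1}(\Delta).
\]

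The hard part will be to combine these two ingredients into the global estimate $\sum_v g(f_0(\lk(v))) \leq 2m \cdot f_{2m-1}(J_m(n))$. A naive Jensen-type argument fails: since $g$ is convex, $\sum_v g(d_v)$ with fixed $\sum d_v$ is maximized at extreme (not uniform) distributions, so the facet-wise constraint alone is insufficient. I would instead exploit both the vertex-wise upper bound on $f_{2m-2}(\lk(v))$ from the hypothesis and the facet-wise sum bound simultaneously, via an algebraic manipulation specific to the polynomial structure of $g$. For $m=2$, $g$ is linear and the inequality reduces to $f_3(\Delta) \leq f_1(\Delta) - n$, so the bound follows from the known estimate $f_1(\Delta) \leq f_1(J_2(n))$ for flag $3$-pseudomanifolds. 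For $m=3$, $g$ is essentially quadratic, and the required estimate should follow by combining the facet-sum bound with an AM--GM or Cauchy--Schwarz type inequality; the general case requires analogous polynomial manipulations tailored to the degree of $g$.
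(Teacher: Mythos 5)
Your setup is correct and follows the paper closely up to a point: the identity $f_{2m-2}(\Delta)=m\,f_{2m-1}(\Delta)$, the link-sum identity $\sum_v f_{2m-2}(\lk v)=2m\,f_{2m-1}(\Delta)$, the facet-sum bound $\sum_{v\in\sigma}f_0(\lk v)\leq 2(m-1)n+4m$ from Lemma~\ref{lemma: sum of vertex links}, and the resulting double-counting inequality
\[
\sum_v f_0(\lk v)\,f_{2m-2}(\lk v)\ \leq\ \bigl(2(m-1)n+4m\bigr)f_{2m-1}(\Delta)
\]
are exactly what the paper establishes. However, you then stall at the crucial last step and explicitly leave it unresolved ("the hard part will be to combine..."), and the route you gesture toward is misdirected.

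The missing idea is to \emph{invert} the hypothesis \eqref{eq: even-dimension conjecture} rather than use it directly. Since $f_{2m-2}(J_{m-1}^*(k))=2\bigl(\tfrac{k-2}{m-1}\bigr)^{m-1}$ up to rounding, the hypothesis is equivalent to
\[
f_0(\lk v)\ \geq\ (m-1)\left(\tfrac12 f_{2m-2}(\lk v)\right)^{\!\frac{1}{m-1}}+2,
\]
a \emph{lower} bound on $f_0(\lk v)$ in terms of $f_{2m-2}(\lk v)$. Substituting this into the left-hand side of the double-counting inequality gives
\[
\bigl(2(m-1)n+4m\bigr)f_{2m-1}(\Delta)\ \geq\ \sum_v\Bigl[2(m-1)\left(\tfrac12 f_{2m-2}(\lk v)\right)^{\!\frac{m}{m-1}}+2f_{2m-2}(\lk v)\Bigr],
\]
and now convexity of $x\mapsto x^{m/(m-1)}$ points the \emph{right} way: by the power-mean (Jensen) inequality and $\sum_v f_{2m-2}(\lk v)=2m f_{2m-1}(\Delta)$, the sum is at least $2n(m-1)\bigl(\tfrac{m f_{2m-1}(\Delta)}{n}\bigr)^{m/(m-1)}+4m f_{2m-1}(\Delta)$. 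This produces a self-contained polynomial inequality in $f_{2m-1}(\Delta)$ alone, which simplifies to $f_{2m-1}(\Delta)\leq(n/m)^m=f_{2m-1}(J_m(n))$. Your concern that "naive Jensen fails" applies to the un-inverted formulation (bounding $\sum_v g(f_0(\lk v))$ from above) and is a red herring; the inversion eliminates the issue. Your fallback for $m=2$ also has a gap: reducing to $f_3(\Delta)\leq f_1(\Delta)-n$ and then invoking $f_1(\Delta)\leq f_1(J_2(n))$ is circular, since that edge bound is not part of the hypotheses and is not available at this point in the argument (the paper's Theorem preceding this proposition only proves it for \emph{normal} 3-pseudomanifolds with $\chi=0$, and the theorem after it derives the $f_1$ bound \emph{from} the facet bound, not the other way around). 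The self-contained inequality above handles $m=2$ without any such appeal.
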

\begin{proof}
	By Lemma \ref{lemma: sum of vertex links}, $\sum_{v\in\sigma}f_0(\lk(v))\leq 2(m-1)n+4m$ for any facet $\sigma$ of $\Delta$. Hence
	\begin{equation}\label{eq: upper bound of f_1}
		\big( 2(m-1)n+4m \big) f_{2m-1}(\Delta)\geq \sum_{\sigma\in\Delta,|\sigma|=2m}\sum_{v\in\sigma} f_0(\lk(v))=\sum_{v\in\Delta} f_0(\lk(v))f_{2m-2}(\lk(v)).
	\end{equation}
	The assumption on the vertex links is equivalent to
	\[f_0(\lk(v))\geq (m-1)\cdot\left( \frac{1}{2}f_{2m-2}(\lk(v))\right) ^{\frac{1}{m-1}}+2.\] Hence the RHS of (\ref{eq: upper bound of f_1}) is at least
	\[\sum_{v\in\Delta}2(m-1)\left( \frac{1}{2}f_{2m-2}(\lk(v))\right) ^{1+\frac{1}{m-1}}+2f_{2m-2}(\lk(v)).\]
    By the H\"older inequality, $$\sum_{v\in \Delta} \frac{1}{2}f_{2m-2}(\lk(v))\leq \left( \sum_{v\in\Delta} (\frac{1}{2}f_{2m-2}(\lk(v)))^{\frac{m}{m-1}}\right) ^{\frac{m-1}{m}}\left( \sum_{v\in\Delta}|1|^{m}\right) ^{\frac{1}{m}}.$$ Hence together with the fact that $\sum_{v\in\Delta}\frac{1}{2}f_{2m-2}(\lk(v))=mf_{2m-1}(\Delta)$, we conclude that the RHS of (\ref{eq: upper bound of f_1}) is also greater than 
	\[2n(m-1)\left( \frac{mf_{2m-1}(\Delta)}{n}\right) ^{1+\frac{1}{m-1}}+4mf_{2m-1}(\Delta).\]
	Finally plugging in back to (\ref{eq: upper bound of f_1}) and simplify, we obtained that $f_{2m-1}(\Delta)\leq (\frac{n}{m})^m$. Furthermore for $f_{2m-1}(\Delta)$ to attain the maximum, we must have that 1) $f_{2m-2}(\lk(v))=f_{2m-2}(J^*_{m-1}(f_0(\lk(v)))$ for all vertex $v\in \Delta$, and 2) the numbers of vertices among all vertex links are as even as possible. These two conditions lead to a better estimation $f_{2m-1}(\Delta)\leq f_{2m-1}(J_m(n))$. Since $f_{2m-2}(\Delta)=mf_{2m-1}(\Delta)$ for any pseudomanifold $\Delta$, the other inequality also holds.
\end{proof}


Now we are ready to prove our main upper bound results on flag homology 5-manifold.
\begin{theorem}\label{thm: main result}
	Let $\Delta$ be a flag 5-dimensional simplicial complex with $n$ vertices. If furthermore, $\Delta$ is either an Eulerian normal pseudomanifold or homology manifold, then $f_i(\Delta)\leq f_i(J_3(n))$ for all $0\leq i\leq 5$.
\end{theorem}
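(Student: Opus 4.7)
The plan is to apply Proposition~\ref{theorem: upper bound inequality} at $m=3$ and then leverage Lemma~\ref{lm: dehn-sommerville} to obtain bounds on all face numbers. First, I would verify the hypothesis \eqref{eq: even-dimension conjecture} of Proposition~\ref{theorem: upper bound inequality} at $m=3$. For any vertex $v \in \Delta$, the link $\lk(v)$ is a flag $4$-dimensional Eulerian complex (a flag homology $4$-sphere when $\Delta$ is a homology $5$-manifold, and a flag Eulerian normal $4$-pseudomanifold when $\Delta$ is Eulerian). I must show $f_4(\lk(v)) \leq f_4(J_2^*(f_0(\lk(v))))$. This is the Adamaszek--Hladk\'y conjecture (Conjecture~\ref{conj: A-H}) restricted to facet numbers at $m=2$, and it is the crucial auxiliary input. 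My route is to apply Dehn--Sommerville for $4$-dimensional Eulerian complexes, which gives the identity $f_4(L) = 2f_1(L) - 6f_0(L) + 12$ and so translates the $f_4$-bound into the equivalent inequality $f_1(L) \leq f_0(L)^2/4 + 2f_0(L) - 5$; this $f_1$-inequality I would then prove by an inclusion-exclusion argument over the vertex links of $L$, mirroring Lemmas~\ref{lemma: sum of codim 2 face links}--\ref{lemma: sum of vertex links} but adapted to dimension $4$, invoking Zheng's earlier bound \cite{Z} on $f_1$ of flag normal $3$-pseudomanifolds applied to each $\lk_L(u)$.

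Once the hypothesis is in place, Proposition~\ref{theorem: upper bound inequality} immediately delivers $f_4(\Delta) \leq f_4(J_3(n))$ and $f_5(\Delta) \leq f_5(J_3(n))$. To cover the remaining face numbers, observe that $f_0(\Delta) = n = f_0(J_3(n))$ is trivial, and the formulas $f_2 = f_5 + 2f_1 - 2f_0$, $f_3 = 3f_5 + f_1 - f_0$, and $f_4 = 3f_5$ from Lemma~\ref{lm: dehn-sommerville} are monotone in $(f_1, f_5)$ once $f_0$ is fixed; so it is enough to prove $f_1(\Delta) \leq f_1(J_3(n))$. I would obtain this by the same template as the theorem immediately preceding the main result (which establishes the $f_1$-bound for flag normal $3$-pseudomanifolds): combine Lemma~\ref{lm: dehn-sommerville}, Lemma~\ref{lemma: sum of vertex links}, and the auxiliary facet-number bound on $\lk(v)$ to chain inequalities and isolate $f_1$.

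The hardest step is the even-dimensional auxiliary bound for flag Eulerian $4$-complexes---the facet-number case of Conjecture~\ref{conj: A-H} at $m=2$. Lemmas~\ref{lemma: sum of codim 2 face links}--\ref{lemma: sum of vertex links} are designed around odd-dimensional parity (visible in the constant $2d(d-2)$ and the alternating-sign telescoping of Lemma~\ref{lm: a_k, b_k}), so the even-dimensional analog will require a genuinely new variant of the inclusion-exclusion technique. This is precisely the gap that the introduction's conditional statement identifies, and settling it in dimension $4$ is what converts Proposition~\ref{theorem: upper bound inequality} into an unconditional upper bound theorem.
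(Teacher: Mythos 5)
Your overall architecture matches the paper's: apply Proposition~\ref{theorem: upper bound inequality} at $m=3$ to get $f_4$ and $f_5$, handle $f_1$ separately, and fill in $f_2, f_3$ via Dehn--Sommerville. Your arithmetic is also right: for a flag $4$-dimensional Eulerian complex $L$ with $p$ vertices, Dehn--Sommerville does give $f_4(L) = 2f_1(L) - 6p + 12$, so the facet-number version of Conjecture~\ref{conj: A-H} at $m=2$ is equivalent to $f_1(L) \le p^2/4 + 2p - 5$.

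The genuine gap is in how you verify hypothesis~\eqref{eq: even-dimension conjecture}. You treat the bound $f_4(\lk(v)) \le f_4(J_2^*(f_0(\lk(v))))$ as an open problem that would require ``a genuinely new variant of the inclusion-exclusion technique,'' and you leave it as a sketch. In fact this is a known result and the paper cites it directly: Gal (Corollary~3.1.7 in~\cite{G}) proved that the $\gamma$-polynomial of a flag homology sphere of dimension at most $4$ has only real roots, and that argument extends to flag Eulerian $4$-complexes because the $\gamma$-polynomial is degree $2$ and determined by $f_0, f_1$. A real root of $1 + \gamma_1 t + \gamma_2 t^2$ forces $\gamma_1^2 \ge 4\gamma_2$, which is exactly the needed facet-number inequality. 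By not spotting this, you have replaced a one-line citation with a speculative inclusion-exclusion argument applied to the vertex links of $L$; as you set it up (invoking Zheng's $3$-pseudomanifold bound on each $\lk_L(u)$), there is no demonstrated route from those link bounds back to a \emph{sharp} bound on $f_1(L)$, since $\sum_u f_1(\lk_L(u)) = 3f_2(L)$ rather than anything directly controlling $f_1(L)$. So the proposal as written does not close the crucial step. You also mischaracterize the introduction: its conditional statement concerns general $m$, whereas for $m \le 3$ the relevant even-dimensional input is already available, which is why the theorem is unconditional.

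One smaller point: for the $f_1(\Delta) \le f_1(J_3(n))$ bound, you gesture at ``chaining inequalities,'' but the paper simply invokes the argument of Proposition~4.2 of~\cite{Z}, which is a self-contained Cauchy--Schwarz-style computation with the edge-link facet counts; that reference should be made explicit rather than re-derived vaguely.
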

\begin{proof}
	Assume first that $\Delta$ is a Eulerian normal pseudomanifold. Since $\lk(v)$ is Eulerian, the link of every vertex has a palindromic $h$-polynomial. Hence Corollary 3.1.7 in \cite{G} applies and the $\gamma$-polynomial of $\lk(v)$ has at least one real root. This implies that $4\gamma_2(\lk(v))\leq \gamma_1(\lk(v))^2$ and hence inequality (\ref{eq: even-dimension conjecture}) holds. By Proposition \ref{theorem: upper bound inequality} we have that $f_5(\Delta)\leq f_5(J_3(n))$. The proof of the inequality $f_1(\Delta)\leq f_1(J_3(n))$ is exactly the same as the proof of Proposition 4.2 in \cite{Z}; we omit the details. Finally the other inequalities follow from the fact that the coefficients of $f_1(\Delta)$ and $f_5(\Delta)$ are nonnegative, in the expressions for the other $f_j(\Delta)$, see Lemma \ref{lm: dehn-sommerville}. The same proof also applies to any flag homology 5-manifold.
\end{proof}
\begin{remark}
	As indicated by the proof above, the inequality $f_5(\Delta)\leq f_5(J_3(n))$ also holds for any 5-dimensional flag Eulerian complex $\Delta$.
\end{remark}
Next we characterize the maximizer of face numbers of flag 5-manifolds. The following lemma gives a sufficient condition for a flag 2-sphere to be a suspension of a circle. For simplicity of notation, we write the edge link $\lk(\{u,v\})$ as $\lk(uv)$, and we say a facet $\sigma$ is adjacent to another facet $\sigma'$ if they share a ridge.
\begin{lemma}\label{lemma: flag 2-sphere property}
	Let $\Delta$ be a flag simplicial 2-manifold and $\sigma$ a facet of $\Delta$. If $\cup_{v\in\sigma}V(\lk(v))=\cup_{v\in\sigma'}V(\lk(v))=V(\Delta)$ for every facet $\sigma'$ that is adjacent to $\sigma$, then $\Delta$ is the suspension of a circle of length $\geq 4$.
	\end{lemma}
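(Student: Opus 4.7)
The plan is to find two non-adjacent vertices $x,y$ of $\Delta$ with $V(\lk(x))=V(\lk(y))=V(\Delta)\setminus\{x,y\}$, and then apply Lemma~\ref{lm: join of face links} to a facet containing $x$ decomposed as $\{x\}\cupdot\tau$, where $\tau$ is the opposite edge and $\lk(\tau)=\{x,y\}$ (a $0$-sphere). This yields $\Delta=\lk(x)*\lk(\tau)$, the suspension of the cycle $\lk(x)$, which has length $\geq 4$ because $\Delta$ is flag.

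Write $\sigma=\{a,b,c\}$ and let $a',b',c'$ denote the apex vertices of the three facets $\sigma_1=\{a,b,c'\}$, $\sigma_2=\{a,c,b'\}$, $\sigma_3=\{b,c,a'\}$ adjacent to $\sigma$. Flag together with $\dim\Delta=2$ forces $a',b',c'$ to be pairwise distinct. For each $v\in V(\Delta)\setminus\sigma$ set $N(v)=V(\lk(v))\cap\sigma$; the hypothesis on $\sigma$ gives $N(v)\neq\emptyset$, and $|N(v)|\leq 2$ with equality only when $v\in\{a',b',c'\}$ (since $|\lk(e)|=2$ for each edge $e\subset\sigma$ in the $2$-manifold). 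Letting $A=\{v:N(v)=\{a\}\}$ and defining $B,C$ analogously, we get the partition $V(\Delta)\setminus\sigma=\{a',b',c'\}\cupdot A\cupdot B\cupdot C$. Applying the hypothesis to $\sigma_1,\sigma_2,\sigma_3$ gives $c'\sim C$, $b'\sim B$, $a'\sim A$. Since $\lk(a)$ is a cycle, the consecutive edges $\{c',b\},\{b,c\},\{c,b'\}$ are forced in this cycle and $A$ completes it as a path from $b'$ back to $c'$; analogously for $\lk(b)$ and $\lk(c)$. In particular $|A|=0\Rightarrow b'\sim c'$, $|B|=0\Rightarrow a'\sim c'$, and $|C|=0\Rightarrow a'\sim b'$.

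The crucial step is to show at most one of $|A|,|B|,|C|$ is positive. For this I use the basic fact that in a flag $2$-manifold every edge $\{u,v\}$ has exactly two common neighbors (the flag property forces every common neighbor $w$ to produce a face $\{u,v,w\}$, so the common neighbors coincide with $V(\lk(\{u,v\}))$, of cardinality $2$). Assume $|A|,|B|\geq 1$; the other pairs are handled symmetrically. If $|C|=0$, then $\{a',b'\}$ is an edge by the previous step, and its common neighbors include three distinct vertices: $c$ (adjacent to both from $\sigma_2,\sigma_3$), the last $A$-vertex $v_{|A|}$ of the $\lk(a)$-path (adjacent to $b'$ via the cycle and to $a'$ via $a'\sim A$), and the first $B$-vertex $w_1$ of the $\lk(b)$-path (adjacent to $a'$ via the cycle and to $b'$ via $b'\sim B$) --- contradiction. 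If $|C|\geq 1$, then $a',b',c'$ are pairwise non-adjacent, and I instead examine $\lk(c')$. Flag-cycle arguments rule out $c'$ being adjacent to $a',b'$, to $v_i$ for $i>1$, or to $w_j$ for $j<|B|$, so $V(\lk(c'))=\{a,b,v_1,w_{|B|}\}\cup C$, and its cycle forces the arc $v_1,a,b,w_{|B|}$ closed by a chain through $C$. Thus $v_1\sim c_j$ for some $c_j\in C$ at one end of that chain; combining this with $c_j\sim a'$ or $c_j\sim b'$ (from the $\lk(c)$-cycle), with $v_1\sim a',c'$, and with the corresponding adjacencies extracted from the links of $b'$ and $a'$, one identifies (through a short sub-case analysis on $|A|,|B|,|C|$) an edge incident to $v_1$ (for instance $\{v_1,c'\}$, $\{v_1,c_j\}$, or $\{w_j,c_k\}$) having three common neighbors --- again a contradiction.

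Granted this claim, after permuting $\{a,b,c\}$ we may assume $|B|=|C|=0$. Then $V(\lk(a))=\{b,c,b',c'\}\cup A=V(\Delta)\setminus\{a,a'\}$, and $\lk(\{b,c\})=\{a,a'\}$ is a $0$-sphere (so $a\not\sim a'$). Applying Lemma~\ref{lm: join of face links} to the decomposition $\sigma=\{a\}\cupdot\{b,c\}$ yields $\Delta=\lk(a)*\lk(\{b,c\})$, the suspension of $\lk(a)$, which (being a flag $1$-manifold) is a cycle of length $\geq 4$. The main obstacle lies in the contradiction step when $|C|\geq 1$: carefully pinning down the vertex set and cyclic ordering of $\lk(c')$, and then in each sub-subcase of $|A|,|B|,|C|$ exhibiting a specific edge whose common-neighbor count in $\Delta$ strictly exceeds $2$.
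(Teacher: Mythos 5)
Your approach is genuinely different from the paper's: the paper works with the triangulated $2$-ball $\cup_{a\in\sigma}\st(a)$ and its boundary cycle $C$, splitting $C$ into arcs $P_u,P_v,P_w$ and arguing about adjacencies across the disk, while you directly partition $V(\Delta)\setminus\sigma$ into $\{a',b',c'\}\cupdot A\cupdot B\cupdot C$ (which corresponds to the interior arcs $P_u\setminus\partial P_u$, etc.) and run a common-neighbor count. Your setup, the partition, the adjacencies $a'\sim A$, $b'\sim B$, $c'\sim C$, the reduction via Lemma~\ref{lm: join of face links} when $|B|=|C|=0$, and the sub-case ``exactly two of $|A|,|B|,|C|$ positive'' are all correct; in particular the contradiction via the edge $\{a',b'\}$ having common neighbors $c$, $v_{|A|}$, $w_1$ is clean.

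The genuine gap is exactly where you flag it: the case $|A|,|B|,|C|\geq 1$ is not actually proved. The candidate edges you name do not work as stated: $\lk(\{v_1,c'\})$ is precisely $\{a,z_{|C|}\}$ (two vertices, no contradiction), and the assertion that the $C$-vertex $c_j$ adjacent to $v_1$ in $\lk(c')$ is adjacent to $a'$ or $b'$ is unjustified --- only the two endpoints of the $C$-chain in $\lk(c)$ are adjacent to $a'$ or $b'$, and there is no a priori reason these coincide with the endpoints of the $C$-chain in $\lk(c')$. So ``a short sub-case analysis'' is doing serious unproved work, and as written the argument breaks down before reaching a contradiction. One way to actually close this case when $\Delta$ is a $2$-sphere (the only case the paper uses, since this lemma is applied to links of codimension-$3$ faces of a homology manifold): using $\deg(x)=4+|A|$ for $x\in\{a,a'\}$ and similarly for $b,b',c,c'$, Euler's relation $\sum_v\deg(v)=6n-12$ forces every vertex of $A\cup B\cup C$ to have degree exactly $4$; but $v_1$ is adjacent to $a$, $a'$, $c'$, $z_{|C|}$, and also to $v_2$ (if $|A|\geq 2$) or $b'$ (if $|A|=1$) --- five distinct neighbors, a contradiction. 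For general $2$-manifolds this degree count does not suffice and you would need to fill in the missing combinatorial argument.
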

	\begin{proof}
		Let $\sigma=\{u,v,w\}$ be a facet of $\Delta$ and let $\{u,u'\}$, $\{v,v'\}$ and $\{w,w'\}$ be the vertices of $\lk(vw), \lk(uw), \lk(uv)$ respectively. Since $\Delta$ is flag, the intersection of the stars of two adjacent vertices in $\Delta$ must be the union of two 2-faces. Hence $\cup _{a\in\sigma}\st(a)$ is a triangulated 2-ball and its boundary $C$ contains the vertices $u',v',w'$. We write $C$ as the union of three paths $P_u=\lk(u)\cap C, P_v=\lk(v)\cap C$ and $P_w=\lk(w)\cap C$. Also denote by $P_u\backslash \partial P_u$ the path obtained from $P_u$ by removing its endpoints ($P_u\backslash \partial P_u$ can be a singleton or the emptyset). 
		
		If $C$ is a 3-cycle, then $\{u',v',w'\}\in\Delta$ and $\Delta$ is the octahedral sphere. Otherwise, at least one of $P_u, P_v, P_w$, say $P_u$, has more than two vertices. Since $\cup_{a\in\sigma}V(\lk(a))=\cup_{a\in\{u',v,w\}}V(\lk(a))=V(\Delta)$, it follows that $u'$ is connected to every vertex in $P_u\backslash\partial P_u$. Also every vertex link in $\Delta$ is the induced subcomplex on its vertices, so \[|\lk(v')\cap (P_u\cup P_w)|=|\lk(w')\cap (P_u\cup P_v)|=2.\] 
		Hence both $\lk(v')$ and $\lk(w')$ must be 4-cycles. However, from $\cup_{a\in \{v',u,w\}} V(\lk(a))=V(\Delta)$ it follows that $v'$ is connected to every vertex in $P_v\backslash\partial P_v$. So $P_v\backslash\partial P_v=\emptyset$ and $\lk(v)$ is a 4-cycle. Similarly $\lk(w)$ is also a 4-cycle. This implies that $\Delta$ is the suspension of the circle $\lk(u)$.  
		\end{proof}
		
\begin{lemma}\label{lemma: join of circles}
	Let $m\geq 2$ and $\Delta$ be a flag normal $(2m-1)$-pseudomanifold with $n$ vertices. Also let $\sigma$ be a facet of $\Delta$. If the link of every $(2m-4)$-face in $\sigma$ is the suspension of a circle and furthermore, $\cup_{\delta\subset \sigma, |\delta|=2m-2}V(\lk(\delta))=V(\Delta)$, then $\Delta$ is the join of $m$ circles.
\end{lemma}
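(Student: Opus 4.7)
The proof is by induction on $m$. The key step in both the base case $m=2$ and the inductive step is to produce a partition $\sigma=e_1\cupdot\cdots\cupdot e_m$ into $m$ disjoint edges with $V(\lk(e_1))\cupdot V(\lk(\sigma\setminus e_1))=V(\Delta)$, and then invoke Lemma \ref{lm: join of face links} to get $\Delta=\lk(e_1)*\lk(\sigma\setminus e_1)$. Since $\sigma\setminus e_1$ is a $(2m-3)$-dimensional face of a normal pseudomanifold, the factor $\lk(\sigma\setminus e_1)$ is a connected normal $1$-pseudomanifold, hence a cycle. The other factor $\lk(e_1)$ is a flag normal $(2m-3)$-pseudomanifold containing $\sigma\setminus e_1$ as a facet; for $m=2$ it is already a cycle, and for $m\geq 3$ it will be a join of $m-1$ cycles by the inductive hypothesis.

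First I would interpret the covering hypothesis quantitatively. For each $v\in V(\Delta)$, set $S_v:=\{u\in\sigma:\{u,v\}\notin\Delta\}$. Since $\Delta$ is flag, $v\in V(\lk(\rho))$ iff $v\notin\rho$ and $\rho\cap S_v=\emptyset$, so the hypothesis $\bigcup_{\rho\subset\sigma,\,|\rho|=2m-2}V(\lk(\rho))=V(\Delta)$ is equivalent to $|S_v|\leq 2$ for every $v$ (the bound is trivial when $v\in\sigma$).

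The crux of the argument is a structural claim: any two size-$2$ sets $S_{v_1}$ and $S_{v_2}$ are either equal or disjoint. Suppose for contradiction that $S_{v_1}=\{u,u'\}$ and $S_{v_2}=\{u,u''\}$ with $u'\neq u''$. Then $\tau:=\sigma\setminus\{u,u',u''\}$ is a $(2m-4)$-face of $\sigma$ and both $v_1,v_2$ lie in $V(\lk(\tau))$. By hypothesis $\lk(\tau)=\Sp^0*C$ is a suspension of a cycle. Each $v_i$ has at least two non-neighbors in $\lk(\tau)$, but in any suspension of a cycle a pole has exactly one non-neighbor (the other pole), so $v_1$ and $v_2$ must be circle vertices. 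Since $v_1$ is a circle vertex non-adjacent to both $u$ and $u'$ and every pole is adjacent to every circle vertex, neither $u$ nor $u'$ can be a pole; hence $u,u'\in C$. Applying the same to $v_2$ gives $u''\in C$. But $\{u,u',u''\}$ is a triangle of $\lk(\tau)$, and every triangle in a suspension of a cycle is a facet containing exactly one pole, contradicting $u,u',u''\in C$. The argument uses only intrinsic quantities (non-neighbor counts and the pole-in-every-triangle fact), so it is insensitive to the non-uniqueness of the suspension structure when $C$ is a $4$-cycle (which in fact cannot arise here, since then every vertex of $\lk(\tau)$ would have only one non-neighbor).

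Given the structural claim, the distinct size-$2$ sets $S_v$ form a partial matching on $\sigma$, which we extend to a perfect matching $\{e_1,\ldots,e_m\}$. For this $e_1$, every $S_v$ lies entirely in $e_1$ or entirely in $\sigma\setminus e_1$, giving $V(\lk(e_1))\cupdot V(\lk(\sigma\setminus e_1))=V(\Delta)$. After applying Lemma \ref{lm: join of face links}, I would check that $\lk(e_1)$ inherits both hypotheses with $m-1$ in place of $m$: its codim-$3$ face links in $\sigma\setminus e_1$ coincide with codim-$3$ face links of $\sigma$ in $\Delta$ (suspensions of cycles by hypothesis), and the covering condition in $\lk(e_1)$ reduces to $|S_v|\leq 2$ for vertices $v$ adjacent to $e_1$, already known. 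The induction then closes. The principal difficulty is the structural claim about size-$2$ non-neighbor sets being pairwise disjoint; once this is established, the remainder of the argument is routine bookkeeping.
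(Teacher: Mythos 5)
Your proof is correct but takes a genuinely different route from the paper's. The paper's inductive step re-uses the counting identity of Lemma~\ref{lemma: sum of codim 2 face links}: it first checks that every edge link $\lk(e)$, $e\subset\sigma$, inherits the hypotheses and so is a join of $m-1$ cycles by induction, then fixes a $(2m-3)$-face $\tau'$ of $\sigma$ with maximal link, argues from the suspension hypothesis that all but $m$ of the codimension-$2$ face links $\lk(\delta)$, $\delta\subset\sigma$, are forced to be $4$-cycles, and plugs these values back into Lemma~\ref{lemma: sum of codim 2 face links} to get $\sum_i f_0(\lk(\sigma\setminus e_i))=n$, yielding the disjoint vertex cover. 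You instead work directly with the non-neighbor sets $S_v=\{u\in\sigma:\{u,v\}\notin\Delta\}$, observe that the covering hypothesis is exactly the condition $|S_v|\leq 2$, and prove the structural claim that any two size-$2$ sets $S_{v_1},S_{v_2}$ are equal or disjoint --- with the suspension hypothesis used only once, to rule out the overlapping case via the fact that every facet of $\Sp^0*C$ contains exactly one pole (and you correctly dispose of the octahedron ambiguity). This produces the matching $e_1,\dots,e_m$ and the cover $V(\lk(e_1))\cupdot V(\lk(\sigma\setminus e_1))=V(\Delta)$ with no arithmetic, and the inductive step becomes a formal restriction of $S_v$ to $\sigma\setminus e_1$. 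Your version is shorter, avoids the secondary counting computation, and makes visible exactly where the suspension hypothesis is needed; both proofs close by invoking Lemma~\ref{lm: join of face links}.
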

\begin{proof}
	The proof is by induction on $m$. First let $m=2$. If $n=8$, then $\Delta=J_2(8)$. Otherwise let $n>8$. There exists an edge $\{v_1,v_2\}\subset \sigma$ whose link has more than four vertices. We let $\sigma=\{v_1,v_2\}\cup\{u_1,u_2\}$. Since $\lk(v_1), \lk(v_2)$ are suspensions of the circle $\lk(v_1v_2)$, the link $\lk(v_iu_j)$ must be a 4-cycle for any $i, j=1,2$. Then it follows from Lemma \ref{lemma: sum of codim 2 face links} that $f_0(\lk(v_1v_2))+f_0(\lk(v_3v_4))=n+16-4\cdot4=n$. Hence $V(\lk(v_1v_2))\sqcup V(\lk(v_3v_4))=V(\Delta)$ and $\Delta=\lk(v_1v_2)*\lk(v_3v_4)$ by Lemma \ref{lm: join of face links}.
	
	Next assume the claim holds for all flag normal $(2m-1)$-pseudomanifolds with $2\leq m<k$ and we let $m=k$. For any edge $e\subset\sigma$, since $\cup_{\delta\subset \sigma, |\delta|=2k-2}V(\lk(\delta))=V(\Delta)$, we obtain that
	\begin{equation}\label{eq: edge link cover}
		f_0(\lk(e, \Delta))=|\cup_{\delta\subset\sigma, |\delta|=2k-2}V(\lk(\delta, \Delta))\cap V(\lk(e,\Delta))|=|\cup_{\delta\subset\sigma\backslash e, |\delta|=2k-4}V(\lk(\delta, \lk(e))|.
	\end{equation}
	For a facet $\sigma\in \Delta$ that contains $e$, $\sigma\backslash e$ is a facet in $\lk(e)$. If $\tau$ is a $(2k-6)$-face in $\sigma\backslash e$, then $\lk(\tau, \lk(e))=\lk(\tau\cup e, \Delta)$ is the suspension of a circle. Hence by the identity (\ref{eq: edge link cover}) and the inductive hypothesis, the link $\lk(e)$ is the join of $k-1$ circles. 
	
	If $n=4k$, then $\Delta$ is the join of $k$ 4-cycles. (See \cite[Lemma 2.1.14]{G}; the argument continues to hold for flag normal pseudomanifolds.) Otherwise if $n>4k$, then choose a $(2k-3)$-face $\tau'\in\sigma$ whose link has the maximum number of vertices. Since $\Delta\neq J_k(4k)$ and $\cup_{\delta\subset \sigma, |\delta|=2k-2}V(\lk(\delta))=V(\Delta)$, we have $f_0(\lk(\tau'))>4$. Furthermore, the above argument shows that for $e_0=\sigma\backslash \tau$, $\lk(e_0)=C_1*C_2*\cdots*C_{k-1}$. Now in each $C_i$ there is an edge $e_i$ such that $\sigma=\cup_{i=0}^{k-1}e_i$ of $\Delta$. Let $\delta$ be a $(2k-3)$-face of $\Delta$. If $\delta=\sigma\backslash\{v_1,v_2\}$, where $v_1\in e_0$ and $v_2\in \cup_{i=1}^{k-1}e_i$, then $\delta\backslash e_0$ is a $(2k-4)$-face whose link is the suspension of a circle. Since $\lk(\tau\backslash e_0)$ is a circle of length $>4$, it follows that $\lk(\delta)$ must be a 4-cycle. Also when $e_0\subset \delta$, the link $\lk(\delta)=\lk(\delta\backslash e_0,\lk(e_0))$ is always a 4-cycle, except perhaps for $\delta=\sigma\backslash e_i$, $i=1,2,\cdots,k-1$. Hence by Lemma \ref{lemma: sum of codim 2 face links} and $\cup_{\delta\in \sigma, |\delta|=2m-2}V(\lk(\delta))=V(\Delta)$,
	\[\sum_{i=0}^{k-1}f_0(\lk(\sigma\backslash e_i))=n+8k(k-1)-\left(\binom{2k}{2}-k\right)\cdot 4=n.\]
	Hence $V(\Delta)=V(\lk(\sigma\backslash e_0))\cupdot V(\lk(\sigma\backslash e_1))\cupdot\cdots\cupdot V(\lk(\sigma\backslash e_{k-1}))=V(\lk(\sigma\backslash e_0))\cupdot V(\lk(e_0))$. Thus by Lemma \ref{lm: join of face links}, $\Delta$ is the join of the circles $\lk(\sigma\backslash e_{i})$, $0\leq i\leq k-1$. 
\end{proof}

The above two lemmas immediately imply the following theorem.
\begin{theorem}
	Let $\Delta$ be a flag homology $5$-manifold with $n$ vertices. Then $f_i(\Delta)=f_i(J_3(n))$ for some $1\leq i\leq 5$ if and only if $\Delta=J_3(n)$.
\end{theorem}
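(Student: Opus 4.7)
The plan is to trace the equality cases in Proposition~\ref{theorem: upper bound inequality} backward and then use Lemmas~\ref{lemma: flag 2-sphere property} and \ref{lemma: join of circles} to force $\Delta$ to be a join of three circles of appropriate lengths. First I would use Lemma~\ref{lm: dehn-sommerville} together with the upper bounds $f_1(\Delta)\leq f_1(J_3(n))$ and $f_5(\Delta)\leq f_5(J_3(n))$ (both established in Theorem~\ref{thm: main result}) to observe that the equality $f_i(\Delta)=f_i(J_3(n))$ for any $i\in\{2,3,4\}$ forces either $f_1(\Delta)=f_1(J_3(n))$ or $f_5(\Delta)=f_5(J_3(n))$. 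Hence it suffices to treat these two extremal cases. The $f_1$ case is an equality analysis of the proof of Proposition~4.2 in \cite{Z} to which the same method applies, so I focus below on $f_5(\Delta)=f_5(J_3(n))=(n/3)^3$.

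Under this assumption every inequality in the proof of Proposition~\ref{theorem: upper bound inequality} is tight. In particular the averaging step forces $\sum_{v\in\sigma}f_0(\lk(v))=4n+12$ for \emph{every} facet $\sigma$ of $\Delta$, so the equality clause of Lemma~\ref{lemma: sum of vertex links} yields, for every facet $\sigma$, that (a) $\bigcup_{v\in\sigma\setminus\tau}V(\lk(v\cup\tau))=V(\lk(\tau))$ whenever $\tau\subset\sigma$ with $|\sigma\setminus\tau|\geq 3$, and (b) $\bigcup_{\delta\subset\sigma,\,|\delta|=4}V(\lk(\delta))=V(\Delta)$. The next step is to show that for every $2$-face $\tau$ of every facet $\sigma$ of $\Delta$, the flag simplicial $2$-sphere $\lk(\tau)$ is the suspension of a circle. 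Taking $|\sigma\setminus\tau|=3$ in (a) and identifying $\lk(v\cup\tau,\Delta)=\lk(v,\lk(\tau))$, the facet $\sigma\setminus\tau$ of $\lk(\tau)$ satisfies the covering hypothesis of Lemma~\ref{lemma: flag 2-sphere property}; any facet of $\lk(\tau)$ adjacent to $\sigma\setminus\tau$ has the form $\sigma'\setminus\tau$ where $\sigma'$ is a facet of $\Delta$ still containing $\tau$, so applying (a) to $\sigma'$ gives the same covering and Lemma~\ref{lemma: flag 2-sphere property} applies.

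Combining this with (b), Lemma~\ref{lemma: join of circles} yields $\Delta=C_a*C_b*C_c$ for circles with $a+b+c=n$ and each $a,b,c\geq 4$; the equality $abc=(n/3)^3$ then forces each of $a,b,c$ to be either $\lfloor n/3\rfloor$ or $\lceil n/3\rceil$, so $\Delta=J_3(n)$. The main obstacle I anticipate is the adjacent-facet step: one must verify that every facet of $\lk(\tau)$ neighboring $\sigma\setminus\tau$ comes from a facet of $\Delta$ containing $\tau$. This is immediate because $\tau$ lies in the shared ridge of the two facets of $\Delta$, but it is precisely this observation that globalizes the equality clause of Lemma~\ref{lemma: sum of vertex links} from a single facet to the covering hypothesis of Lemma~\ref{lemma: flag 2-sphere property} required at \emph{all} adjacent facets inside $\lk(\tau)$. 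Everything else amounts to unwinding the equality cases already encoded in Lemmas~\ref{lemma: sum of vertex links}, \ref{lemma: flag 2-sphere property}, and \ref{lemma: join of circles}.
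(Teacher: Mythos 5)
Your proposal is correct and follows essentially the same route as the paper: reduce $i\in\{2,3,4\}$ to the $i=1$ and $i=5$ cases, then for $f_5$-equality trace the equality conditions of Proposition~\ref{theorem: upper bound inequality} and Lemma~\ref{lemma: sum of vertex links} back through Lemmas~\ref{lemma: flag 2-sphere property} and~\ref{lemma: join of circles}; your explicit treatment of the Dehn--Sommerville reduction and of the adjacent-facet globalization fills in details the paper leaves implicit. One small slip: $f_5(J_3(n))=(n/3)^3$ only when $3\mid n$ — the paper sidesteps this by instead extracting equality in (\ref{eq: even-dimension conjecture}) at every vertex link, which directly forces the three circle lengths to differ by at most one.
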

\begin{proof}
	The case $i=1$ is proved in \cite{Z}. Assume that $f_5(\Delta)=f_5(J_3(n))$. For inequality (\ref{eq: upper bound of f_1}) to be an equality, by Lemma \ref{lemma: sum of vertex links} we have $|\cup_{v\in \sigma\backslash\tau}V(\lk(v\cup\tau))|=f_0(\lk(\tau))$ for any 2-face $\tau$ and any facet $\sigma\supset \tau$. Hence by Lemma \ref{lemma: flag 2-sphere property}, the link of every 2-face in $\Delta$ is the suspension of circle. So by Lemma \ref{lemma: join of circles} and the proof of Proposition \ref{theorem: upper bound inequality}, it follows that $\Delta=C_1*C_2*C_3$, where $C_i$ are circles of length at least 4, and inequality (\ref{eq: even-dimension conjecture}) holds as an equality for every vertex link in $\Delta$. This means $C_i$ must have either $\left\lfloor \frac{n}{3}\right\rfloor$ or $\left\lceil \frac{n}{3}\right\rceil$ vertices, and $\Delta=J_3(n)$. The proof for $i=4$ is the same since $f_4(\Delta)=3f_5(\Delta)$. Also if $f_i(\Delta)=f_i(J_3(n))$ for $i=2$ or 3, then by Lemma \ref{lm: dehn-sommerville} and the fact that $f_j(\Delta)\leq f_j(J_3(n))$ for $j=1,5$, we must have that $f_1(\Delta)=f_1(J_3(n))$ and $f_5(\Delta)=f_5(J_3(n))$. Hence $\Delta=J_3(n)$.
\end{proof}

Recall that Gal conjectured that if $\Delta$ is a flag simplicial 3-sphere on $n$ vertices such that $f_1(\Delta)>\frac{1}{4}(n^2+2n+16)$, then $\Delta$ is the join of two circles, see Conjecture \ref{conj: Gal}. This conjecture is optimal; consider the flag 3-sphere $\Gamma$ obtained by taking the join of a $(\frac{n}{2}-1)$-cycle and a $\frac{n}{2}$-cycle and subdividing an edge whose link is the 4-cycle. Then $\Gamma$ is not the join of two circles, and $f_1(\Gamma)=\frac{1}{4}(n^2+2n+16)$. It was showed in \cite{AH1} that this conjecture holds asymptotically. We propose the following generalization of Gal's conjecture.
\begin{conjecture}\label{conj: almost join of cycles}
	For any $m\geq 2$, there exists a constant $b=b(m)$ such that if $\Delta$ be a flag homology $(2m-1)$-manifold with $n$ vertices and $f_{2m-1}(\Delta)> f_{2m-1}(J_m(n))-b(m)n^{m-1}$, then $\Delta$ is the join of $m$ cycles.
\end{conjecture}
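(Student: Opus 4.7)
The plan is to upgrade the rigidity in Theorem 3.8 to a stability result by carefully tracking the slack in the chain of inequalities underlying Proposition 3.4. Set $\varepsilon := f_{2m-1}(J_m(n)) - f_{2m-1}(\Delta)$ and assume $\varepsilon \leq b(m)n^{m-1}$. The double inequality
\[
(2(m-1)n + 4m)\, f_{2m-1}(\Delta) \;\ge\; \sum_{v\in\Delta} f_0(\lk(v))\, f_{2m-2}(\lk(v)) \;\ge\; 2(m-1)n\left(\tfrac{m\,f_{2m-1}(\Delta)}{n}\right)^{\!m/(m-1)} + 4m\,f_{2m-1}(\Delta)
\]
becomes an equality at $f_{2m-1}(\Delta) = (n/m)^m$. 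A first-order expansion around this point shows that the derivative of the combined gap with respect to $f_{2m-1}(\Delta)$ equals $-2n$, so the combined slack across the whole chain is at most $2n\varepsilon + O(\varepsilon) \leq 2b(m)\, n^m + O(n^{m-1})$.

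This total slack decomposes into three nonnegative parts, each bounded by the total: the facet-by-facet slack $s_1$ in Lemma \ref{lemma: sum of vertex links}, the vertex-by-vertex slack $s_2$ in the even-dimensional inequality $f_{2m-2}(\lk(v)) \leq f_{2m-2}(J_{m-1}^*(f_0(\lk(v))))$ (available for $m=2,3$ via Gal's inequality on $\gamma_2$), and the slack $s_3$ from power-mean convexity of the $f_0(\lk(v))$. Since $s_1$ is a sum over $\Theta(n^m)$ facets of nonnegative integers and $s_1 \le 2b(m)n^m$, the number of \emph{bad} facets (those with positive contribution to $s_1$) is at most $2b(m) n^m$. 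A facet has $2m$ codimension-one neighbors in $\Delta$, so a single union bound over such stars locates a facet $\sigma^*$ for which $\sigma^*$ together with each of its $2m$ ridge-neighbors are simultaneously tight in Lemma \ref{lemma: sum of vertex links}, provided $b(m)$ is chosen smaller than an explicit absolute constant depending only on $m$.

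Given such a $\sigma^*$, I would invoke the equality conditions of Lemma \ref{lemma: sum of vertex links}: for every $(2m-4)$-face $\tau \subset \sigma^*$, the flag $2$-manifold $\lk(\tau)$ inherits the hypothesis $\cup_{v\in\sigma'} V(\lk(v,\lk(\tau))) = V(\lk(\tau))$ at the facet $\sigma' = \sigma^* \setminus \tau$ and at each of its three neighbors in $\lk(\tau)$; those neighbors correspond precisely to facets of $\Delta$ sharing a ridge with $\sigma^*$ that contains $\tau$, all of which are tight by construction. Lemma \ref{lemma: flag 2-sphere property} then forces $\lk(\tau)$ to be the suspension of a circle. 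The remaining equality condition $\cup_{\delta \subset \sigma^*, |\delta| = 2m-2} V(\lk(\delta)) = V(\Delta)$ supplies the second hypothesis of Lemma \ref{lemma: join of circles}, which concludes that $\Delta$ is the join of $m$ circles.

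The main obstacle is the pigeonhole step: one must rule out the possibility that every star of a facet contains at least one bad facet, which forces $b(m)$ below an inverse polynomial in $m$ and yields a clearly suboptimal constant. A related subtlety is that for $m=2$ the conjectured bound coincides with Gal's sharp threshold $\tfrac{1}{4}(n^2 + 2n + 16)$ on $f_1$, so the above scheme recovers only a weaker constant and obtaining the sharp one likely still requires the graph-regularity machinery of Adamaszek-Hladk{\'y}. Finally, pushing the strategy beyond $m = 3$ is blocked by the even-dimensional Conjecture \ref{conj: A-H}, which provides the input inequality $s_2 \ge 0$ and is open for $m - 1 \ge 3$.
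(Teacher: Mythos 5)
Your proposal is correct and follows essentially the same route as the paper. The key structural lemma is identical in both: if some facet $\sigma^*$ and all of its $2m$ ridge-neighbors are simultaneously tight in Lemma \ref{lemma: sum of vertex links}, then Lemmas \ref{lemma: flag 2-sphere property} and \ref{lemma: join of circles} force $\Delta$ to be a join of $m$ circles (this is packaged in the paper as Lemma \ref{lm: bound of M_sigma}). The only difference is bookkeeping: the paper converts the non-existence of such a $\sigma^*$ into a uniform integer improvement $M_\sigma \le (2m+1)(2(m-1)n+4m)-1$ and re-runs the averaging chain, whereas you bound the total facet-level slack $s_1\le 2n\varepsilon$ directly, count bad facets, and union-bound over stars; both yield the same asymptotic constant $b(m)\approx \tfrac{1}{2(2m+1)m^m}$.
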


In what follows we prove this conjecture for $m=2, 3$. Let $\Delta$ be a flag homology $(2m-1)$-manifold, $\sigma_0=\{v_1,v_2,\dots, v_{2m}\}$ a facet of $\Delta$, and $\sigma_i=\sigma\backslash\{v_i\}\cup\{u_i\}$ be the adjacent facets of $\sigma_0$. Define $m_{\sigma_i}=\sum_{v\in\sigma_i} f_0(\lk(v))$ for $0\leq i\leq 2m$ and $M_{\sigma_0}:=\sum_{i=0}^{2m} m_{\sigma_i}$. 

\begin{lemma}\label{lm: bound of M_sigma}
	Let $m=2$ or $3$ and let $\Delta$ be a flag normal $(2m-1)$-pseudomanifold with $n$ vertices. If $\Delta$ is not the join of $m$ cycles, then $M_{\sigma}< (1+2m) (2(m-1)n+4m)$ for any facet $\sigma\in\Delta$.
\end{lemma}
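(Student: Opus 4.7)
The plan is to argue by contrapositive: if $M_{\sigma_0} = (1+2m)(2(m-1)n + 4m)$, then $\Delta$ is the join of $m$ cycles.

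First, since $M_{\sigma_0}$ is a sum of the $2m+1$ quantities $m_{\sigma_0}, m_{\sigma_1}, \ldots, m_{\sigma_{2m}}$, each of which is bounded above by $2(m-1)n + 4m$ by Lemma \ref{lemma: sum of vertex links}, equality in $M_{\sigma_0}$ forces $m_{\sigma_i} = 2(m-1)n + 4m$ for every $i$. Note that the $\sigma_i$'s are well-defined: since $\Delta$ is a normal $(2m-1)$-pseudomanifold, each ridge $\sigma_0 \setminus \{v_i\}$ lies in exactly two facets, so the opposite vertex $u_i$ and hence $\sigma_i$ is uniquely determined. Apply the equality clause of Lemma \ref{lemma: sum of vertex links} to each facet $\sigma_i$ to get, for each $i$: (i) for every face $\tau \subset \sigma_i$ with $|\sigma_i \setminus \tau| \geq 3$, $\bigcup_{v \in \sigma_i \setminus \tau} V(\lk(v \cup \tau)) = V(\lk(\tau))$; and (ii) $\bigcup_{\delta \subset \sigma_i,\ |\delta|=2m-2} V(\lk(\delta)) = V(\Delta)$.

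Next, I fix any $(2m-4)$-face $\tau \subset \sigma_0$. For $m \in \{2,3\}$, the link $\lk(\tau, \Delta)$ is a flag normal $2$-pseudomanifold, hence a flag $2$-manifold, and $\sigma_0 \setminus \tau$ is a (triangular) facet of $\lk(\tau)$. To apply Lemma \ref{lemma: flag 2-sphere property} to $\lk(\tau)$ with this facet, I need the full-vertex covering by vertex-links of $\sigma_0 \setminus \tau$, and of every facet of $\lk(\tau)$ adjacent to it. The first follows from (i) applied to $(\sigma_0, \tau)$, since $|\sigma_0 \setminus \tau| = 3$. An adjacent facet of $\sigma_0 \setminus \tau$ in $\lk(\tau)$ differs by one vertex, so it has the form $\sigma_j \setminus \tau$ for some adjacent facet $\sigma_j$ of $\sigma_0$ with $v_j \in \sigma_0 \setminus \tau$ (so that $\tau \subset \sigma_j$); then (i) applied to $(\sigma_j, \tau)$ provides the required covering in $\lk(\tau)$. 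Thus $\lk(\tau)$ is the suspension of a circle.

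Combining this conclusion for every $(2m-4)$-face of $\sigma_0$ with the global covering condition (ii) for $\sigma_0$, Lemma \ref{lemma: join of circles} implies that $\Delta$ is the join of $m$ cycles, contradicting the hypothesis. The principal delicate point is the bookkeeping in the middle step: one must identify each adjacent facet of $\sigma_0 \setminus \tau$ inside $\lk(\tau)$ with some $\sigma_j \setminus \tau$ where $v_j \in \sigma_0 \setminus \tau$, and supply the covering condition for it from the equality case of $m_{\sigma_j}$ rather than $m_{\sigma_0}$. It is precisely this step that forces $M_{\sigma_0}$ to be a sum over all $2m+1$ facets $\sigma_0, \sigma_1, \ldots, \sigma_{2m}$ and not just $\sigma_0$ alone.
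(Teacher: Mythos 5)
Your proof is correct and matches the paper's argument step for step: both pass to the contrapositive, deduce $m_{\sigma_i}=2(m-1)n+4m$ for each of the $2m+1$ facets, invoke the equality clause of Lemma~\ref{lemma: sum of vertex links} to obtain the covering conditions, feed these into Lemma~\ref{lemma: flag 2-sphere property} to conclude that every $(2m-4)$-face of $\sigma_0$ has suspension-of-a-circle link, and finish with Lemma~\ref{lemma: join of circles}. Your explicit bookkeeping of why each adjacent facet $\sigma_j\setminus\tau$ of $\sigma_0\setminus\tau$ inside $\lk(\tau)$ inherits the covering condition from the equality $m_{\sigma_j}=2(m-1)n+4m$ is exactly the point the paper's terser phrasing ``any facet $\sigma_i$ containing $\tau$'' is relying on, and it explains why $M_\sigma$ must be the sum over all $2m+1$ facets.
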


\begin{proof}
	First by Lemma \ref{lemma: sum of vertex links} we have that $M_\sigma\leq (1+2m) (2(m-1)n+4m)$. Assume by contradiction that there is a facet $\sigma_0=\{v_1, v_2,\cdots, v_{2m}\}$ such that $M_{\sigma_0}=(1+2m)(2(m-1)n+4m)$. In particular, we have $m_{\sigma_i}=2(m-1)n+4m$ for every $0\leq i\leq 2m$. By Lemma \ref{lemma: sum of vertex links}, for any $(2m-4)$-face $\tau\in \sigma_0$ and any facet $\sigma_i$ containing $\tau$, we have $\cup_{w\in \sigma_i\backslash \tau} V(\lk(w\cup\tau))=V(\lk(\tau))$. It follows from Lemma \ref{lemma: flag 2-sphere property} that $\lk(\tau)$ must be the suspension of a circle. Again by Lemma \ref{lemma: sum of vertex links}, $m_{\sigma_0}=2(m-1)n+4m$ implies that $\cup_{\delta\in\sigma_0, |\delta|=2m-2} V(\lk(\delta))=V(\Delta)$. Hence by Lemma \ref{lemma: join of circles}, $\Delta$ is the join of $m$ circles, which is a contradiction.
\end{proof}

We give the proof of Conjecture \ref{conj: almost join of cycles} for $m=2, 3$. 

\begin{proof}
	Assume that $\Delta$ is not the join of $m$ cycles. By Lemma \ref{lm: bound of M_sigma}, we have $M_\sigma< (1+2m)(2(m-1)n+4m)$ for any facet $\sigma\in\Delta$. Denote by $N(\sigma)$ the set of vertices in some adjacent facet of $\sigma$ but not in $\sigma$. It follows that
	\begin{align*}
	\begin{split}
	f_{2m-1}(\Delta)\left( 2(m-1)n+4m-\frac{1}{2m+1}\right)  &\geq \frac{1}{2m+1}\sum_{\sigma\in\Delta} M_{\sigma}\\
	&=\frac{1}{2m+1}\sum_{v\in\Delta}f_0(\lk(v))\cdot \Big(\#\{\sigma: v\in N(\sigma)\}+2m\cdot\#\{\sigma: v\in \sigma\}\Big)\\&=\sum_{v\in\Delta} f_0(\lk(v))f_{2m-2}(\lk(v)).
	\end{split}
	\end{align*}
	Since $\lk(v)$ is a flag homology sphere of dimension 2 or 4, by Gal's result \cite{G} the link $\lk(v)$ satisfies the inequality (\ref{eq: even-dimension conjecture}). The same argument as in the proof of Proposition \ref{theorem: upper bound inequality} yields that $$f_{2m-1}(\Delta)\left( 2(m-1)n+4m-\frac{1}{2m+1}\right) \geq 2n(m-1)\left( \frac{mf_{2m-1}(\Delta)}{n}\right) ^{\frac{m}{m-1}} +4mf_{2m-1}(\Delta),$$
	 \quad i.e. \quad 
	 \begin{equation} \label{eq}
	 \left( 1-\frac{1}{2n(m-1)(2m+1)}\right)^{m-1} \left( \frac{n}{m}\right) ^m \geq f_{2m-1}(\Delta).
	 \end{equation} 
	As in the proof of Proposition \ref{theorem: upper bound inequality}, $f_{2m-1}(\Delta)$ attains its maximum when 1) for all vertex $v\in \Delta$, $f_{2m-2}(\lk(v))=f_{2m-2}(J^*_{m-1}(f_0(\lk(v)))$, and 2) $f_0(\lk(v))$ among all vertex links as even as possible. Hence the leading term $(\frac{n}{m})^m$ in (\ref{eq}) can be corrected as $f_{2m-1}(J_{m}(n))$ and the result follows.
\end{proof}

 \section*{Acknowledgements}
I would like to thank the anonymous referees for valuable comments.
 \bibliographystyle{amsplain}
 
 \end{document}